\theoremstyle{plain}
\newtheorem{theorem}{Theorem}[section]
\newtheorem{corollary}[theorem]{Corollary}
\newtheorem{lemma}[theorem]{Lemma}
\newtheorem{proposition}[theorem]{Proposition}
\newtheorem{fact}[theorem]{Fact}
\newtheorem{claim}[theorem]{Claim}
\newtheorem{question}[theorem]{Question}
\theoremstyle{definition} 
\newtheorem{definition}[theorem]{Definition}
\newtheorem{example}[theorem]{Example}
\newtheorem{remark}[theorem]{Remark}
\newtheorem{notation}[theorem]{Notation}
\theoremstyle{remark}
\newcommand{\cf}{\operatorname{cf}}
\DeclareMathOperator\dom{dom}
\renewcommand{\phi}{\varphi}
\newcommand{\initial}\lessdot
\def\?{?\vadjust

{\vbox to 0pt{\vskip-7pt\hbox to 1.1\hsize{\hfill\huge ?!}}}}
 \def\nfork{\setbox0\hbox{$\bigcup$}%
 \setbox1=\hbox to \wd0{\hfil\vrule width 0.7pt depth 2pt height 7.5pt\hfil}%
 \wd1=0cm\relax\box1\box0}
\renewcommand{\epsilon}{\varepsilon}
\newcommand{\be}{\begin{enumerate}}
\newcommand{\ee}{\end{enumerate}}
\newcommand{\bd}{\begin{definition}}
\newcommand{\ed}{\end{definition}}
\begin{document}


\title{A Note on Edge Colorings and Trees}

\author{Adi Jarden}
\email{jardena@ariel.ac.il}
\address{Department of Mathematics.\\ Ariel University \\ Ariel, Israel}

\author{Ziv Shami}
\email{zivsh@ariel.ac.il}
\address{Department of Mathematics.\\ Ariel University \\ Ariel, Israel}

\begin{abstract}
We point out some connections between existence of homogenous sets for certain edge colorings and existence of branches in certain trees.

As a consequence, we get that any locally additive coloring (a notion introduced in the paper) of a cardinal $\kappa$ has a homogeneous set of size $\kappa$ provided that the number of colors, $\mu$ satisfies $\mu^+<\kappa$.

Another result is that an uncountable cardinal $\kappa$ is weakly compact if and only if $\kappa$ is regular, has the tree property and for each $\lambda,\mu<\kappa$ there exists $\kappa^*<\kappa$ such that every tree of height $\mu$ with $\lambda$ nodes has less than $\kappa^*$ branches.

\end{abstract}

 \keywords{Ramsey, tree property, weakly compact, colorings}
\subjclass{03E02, 03E55}

\maketitle

\tableofcontents

\section{Introduction}

In \cite{sh42} Shelah studied additive colorings as a tool in the study of monadic theories. Here, we focus on combinatorics only. On the one hand, we study Ramsey theorems and the connection between them and the existence of branches in some trees. On the other hand, we find a new characterization of weakly compact cardinals.


In Section 2 we introduce the basic definitions and notations about the colorings we deal in this paper.

In Section 3 we point out a connection between additive colorings and locally-additive colorings.

In Section 4 we assign to each coloring $C$ a tree $T(C)$ and to each tree $T$ with an enumeration $e$, a locally additive coloring $C(T,e)$. The function $C \mapsto T(C)$ is not injective on the class of locally additive colorings. On the other hand, we prove that if $e$ covers $T$ in some sense then
the tree $T(C(T,e))$ is isomorphic to the history tree of $T$.

In Section \ref{section the tree property at successor cardinals}, we prove the following lemma:
for any uncountable regular cardinal, $\kappa$ and $\mu<\kappa$,
the existence of a homogeneous set of size $\kappa$ for every locally additive coloring from $[\kappa]^2$ to $\mu$ is equivalent to the property that every tree of height $\kappa$ whose levels have size $\leq \mu$ admits a $\kappa$-sized branch.

As a consequence, we get a characterization of the successor cardinals satisfying the tree property.

In Section \ref{section strongly inaccessibles} we find an equivalent definition for weakly compact cardinals using the tree power.

In Section 7 prove theorems about existence of large homogeneous sets. On the one hand, we slightly improve a theorem of Shelah about $*$-locally-additive colorings. On the other hand, we derive a theorem on locally-additive colorings from the lemma in Section 5.

\subsection*{Acknowledgments.}
We would like to thank Assaf Rinot for his significant contribution to this paper. The proof of $(2)\Rightarrow (1)$ in Lemma \ref{the main lemma} and  Theorem 5.2, as well as a full proof of Lemma 5.3 included in the paper, is due to Assaf Rinot. In particular, the definitions of the tree $ T(C)$ associated to a coloring $C$ (Definition \ref{definition T(C)}) and the definition of the locally additive coloring associated to an enumerated tree $C( T,e)$ (Definition \ref{definition C(T,e)}) are due to him.  Originally, we had proven only the direction $(1)\Rightarrow (2)$ in Lemma \ref{the main lemma} by a slightly different argument and deduced Theorem \ref{the only theorem}.

\section{Basic Definitions}
We commence by defining several classes of colorings: additive colorings, locally-additive colorings and *-locally-additive colorings. We generalize the well-known Erd\H{o}s and Rado arrow notation.

\begin{definition}
A \emph{coloring} (or edge coloring) of a set $A$ with $\mu$ colors is a function
\[
C:[A]^2 \to D
\]
for some set $D$ of cardinality $\mu$. A coloring of a set $A$ is a coloring of a set $A$ with $\mu$ colors for some $\mu$. A coloring is a coloring of some set $A$.  An element in $D$ is called \emph{a color}.

If $A$ is a linearly ordered set and $C$ is a coloring of $A$ then we write $C(a,b)$ instead of $C(\{a,b\})$ when $a<b$ are in $A$.
\end{definition}

\begin{definition}
Let $C$ be a coloring of a set $A$. A subset $B$ of $A$ is a called \emph{homogeneous for $C$} if $C \restriction [B]^2$ is constant.
\end{definition}

 \begin{definition}\label{definition additive}\cite[$\S$1]{sh42}
Let $(A,<)$ be a linear ordering and let $D$ be a set. A function $C:[A]^2\rightarrow D$
is said to be an additive coloring if there is a function
\[
s:D \times D \to D
\]
 such that if $a,b,c$ are in $A$ and
$a<b<c$ then:
\[
C(a,c)=s(C(a,b),C(b,c))
\]
\end{definition}

\begin{definition}\label{definition locally-additive}
Let $(A,<)$ be a linear ordering and let $D$ be a set. A function $C:[A]^2\to D$
is said to be a locally-additive coloring if for all $a<b<c<d$ we have:
\[
C(b,c)=C(b,d) \Rightarrow C(a,c)=C(a,d)
\]
\end{definition}

\begin{notation}\label{notation local additivity}
Let $(A,<)$ be a linear ordering, $D$ be a set and $C:[A]^2\to D$ be a locally additive coloring.
For $b$ in $A$, let $D_b$ be the set
of colors $t \in D$ such that $C(b,c)=t$ for some $c>b$. For every $a<b$ in $A$ we define the following function:
\begin{align*}
&u_{a,b}:D_b \to D\\
&u_{a,b}(C(b,c))=C(a,c)
\end{align*}
\end{notation}
Note that $u_{a,b}$ is well-defined  for each $a<b$ if and only if the coloring $C$ is locally-additive.

\begin{definition}
Let $\kappa$ be a cardinal. A coloring $C$ of $\kappa$
is said to be a locally-additive coloring if it is locally-additive with respect to the $\in$-order of $\kappa$. $C$ is a *-locally additive coloring  if $C$ is locally-additive with respect to the reverse order of $\kappa$.
\end{definition}

\begin{remark}
Every additive coloring is locally-additive.
\end{remark}

\begin{claim}\label{C^* is additive}
For every locally additive coloring $C$ on a set $A$, for all $a<b<c$ in $A$, we have:
\[
u_{a,b} \circ u_{b,c}=u_{a,c}
\]
\end{claim}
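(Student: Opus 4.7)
The plan is a direct unwinding of the definition of the maps $u_{x,y}$, with the only real content being the verification that the composition $u_{a,b}\circ u_{b,c}$ is defined on the correct domain, namely $D_c$.

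First I would fix an arbitrary element $t\in D_c$. By definition of $D_c$ there exists $d>c$ in $A$ with $t=C(c,d)$. Since $a<b<c<d$, the pairs $(b,d)$ and $(a,d)$ are legitimate inputs for $C$, and the resulting colors witness that $C(b,d)\in D_b$ and $C(a,d)\in D$; this is exactly what is needed to see that $u_{b,c}$ sends $D_c$ into $D_b$, so that the composition $u_{a,b}\circ u_{b,c}$ makes sense as a function from $D_c$ to $D$.

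Next I would compute both sides on $t$. On the one hand, by the defining equation of $u_{b,c}$ we have $u_{b,c}(t)=u_{b,c}(C(c,d))=C(b,d)$, and then by the defining equation of $u_{a,b}$ applied to $C(b,d)\in D_b$, we obtain
\[
u_{a,b}\bigl(u_{b,c}(t)\bigr)=u_{a,b}(C(b,d))=C(a,d).
\]
On the other hand, directly from the definition of $u_{a,c}$ applied to $C(c,d)\in D_c$, we have $u_{a,c}(t)=u_{a,c}(C(c,d))=C(a,d)$. Comparing the two yields $u_{a,b}\circ u_{b,c}(t)=u_{a,c}(t)$, and since $t\in D_c$ was arbitrary the claimed equality of functions follows.

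The main (small) obstacle is that the expression $u_{a,b}(C(b,d))$ tacitly uses that the value depends only on the color $C(b,d)$ and not on the particular $d$ chosen to represent it; this is precisely the content of the remark after Notation \ref{notation local additivity}, namely that local additivity of $C$ is equivalent to the well-definedness of each $u_{x,y}$. Once that is invoked, the argument is a single line of substitution, so I would state the well-definedness point explicitly at the start of the proof and then carry out the chase as above.
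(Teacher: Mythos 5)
Your proposal is correct and follows essentially the same route as the paper: fix $t=C(c,d)\in D_c$ and compute both sides by direct substitution to obtain $C(a,d)$. The additional remarks on well-definedness and on $u_{b,c}$ mapping $D_c$ into $D_b$ are sound and merely make explicit what the paper leaves implicit.
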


\begin{proof}
Let $a<b<c$. Each side of the equality is a function from \[D_c=\{t \in D:t=C(c,d) \text{ for some } d>c\}
\] into $\mu$. Let $t \in D_c$, namely, $C(c,d)=t$ for some $d>c$. On the one hand,
\[
u_{a,c}(t)=u_{a,c}(C(c,d))=C(a,d)
\]

But on the other hand,
\[
(u_{a,b} \circ u_{b,c})(t)=u_{a,b}(u_{b,c}(C(c,d)))=u_{a,b}(C(b,d))=C(a,d)
\]
\end{proof}

\begin{notation} Let $\mu$ be a cardinal. We introduce the following notations:
\be
\item $add$=the class of additive colorings.
\item $add_\mu$=the class of additive colorings with $\mu$ colors.
\item $add_{<\mu}$= the class of additive colorings with less than $\mu$ colors.
\item $la$= the class of locally-additive colorings.
\item *-$la$= the class of *-locally additive colorings on a regular cardinal.
\item Similarly, $la_\mu$, $la_{<\mu}$, *-$la_{\mu}$ and *-$la_{<\mu}$.
\ee
\end{notation}


We generalize the well-known Erd\H{o}s and Rado arrow notation:
\begin{definition}
Let $F_1,F_2$ be families of subsets of a fixed linearly ordered set. Let $\mathfrak{C}$ be a class of colorings.
\[
F_1 \rightarrow  (F_2)_\mathfrak{C}
\]
is the following principle:
For every set $D \in F_1$ and every coloring $C \in \mathfrak{C}$ of $D$, there is a subset $E$ of $D$
such that $E \in F_2$ and $C \restriction [E]^2$ is constant. We put $\kappa$ for $F_1$ when $F_1$ is the family of subsets of $\kappa$ of cardinality $\kappa$ and similarly for $F_2$. We put $\mu$ for $\mathfrak{C}$ when $\mathfrak{C}$ is the class of all colorings with $\mu$ colors.

\end{definition}
Special cases of $\mathfrak{C}$ may be $add$, $la$, *-$la$, $add_{<\aleph_0}$, $la_{\kappa}$,
 and more.

\section{Replacing Additivity by Local Additivity}
In this section we prove that every Ramsey theorem related to additive colorings can be translated to the context of locally additive colorings at one price: the number of colors is decreased (in the infinite case, it is decreased to its logarithm). As an application, we get a direct generalization of a theorem  of Shelah on additive colorings of dense linear orderings.

\begin{theorem}\label{generalizing add to loc less than mu colors}\label{theorem generalizing add to la}
Let $\kappa,\mu$ be cardinals such that $2^\mu+\aleph_0 \leq \kappa$ and let $F$ be a family of subsets of a linearly ordered set.
Assume that
\[
F \rightarrow (F)_{add_\kappa}
\]

Then
\[
F \rightarrow (F)_{la_\mu}
\]
(recall that $la_\mu$ is the class of locally additive colorings with $\mu$ colors).
\end{theorem}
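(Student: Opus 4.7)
The plan is to reduce a locally additive coloring $C:[A]^2\to D$ with $|D|=\mu$ and $A\in F$ to an \emph{additive} coloring $C^*:[A]^2\to D^*$ with $|D^*|\le\kappa$, in such a way that any set homogeneous for $C^*$ is automatically homogeneous for $C$. Granted such a reduction, the conclusion is immediate: harmlessly padding $D^*$ out to size $\kappa$ if necessary, the hypothesis $F\to(F)_{add_\kappa}$ delivers a set $E\in F$ homogeneous for $C^*$, and the reduction makes this $E$ homogeneous for $C$ as well.

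For the reduction I would take
\[
C^*(a,b):=(C(a,b),\,u_{a,b}),
\]
where $u_{a,b}:D_b\to D$ is the partial function from Notation~\ref{notation local additivity}, well-defined precisely because $C$ is locally additive. The codomain I would take to be $D^*=D\times P$, with $P$ the set of all partial functions from $D$ to $D$. Since each partial function is coded by its graph, $|P|\le 2^{|D\times D|}=2^\mu$ when $\mu$ is infinite and $|P|$ is finite otherwise, so in either case the hypothesis $2^\mu+\aleph_0\le\kappa$ gives $|D^*|\le\kappa$. To verify additivity, fix $a<b<c$ in $A$. By the defining equation of $u_{a,b}$ (applied at $c$, using $C(b,c)\in D_b$) together with Claim~\ref{C^* is additive},
\[
C(a,c)=u_{a,b}(C(b,c)),\qquad u_{a,c}=u_{a,b}\circ u_{b,c}.
\]
Defining $s^*((x,f),(y,g)):=(f(y),\,f\circ g)$ whenever $y\in\dom(f)$ and arbitrarily otherwise thus yields a total $s^*:D^*\times D^*\to D^*$ witnessing that $C^*$ is additive.

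Finally, if $E\in F$ is homogeneous for $C^*$, then both coordinates of $C^*$ are constant on $[E]^2$; in particular $C$ itself is constant on $[E]^2$, which is exactly what is required. I do not anticipate a serious obstacle. The conceptual content is entirely in Claim~\ref{C^* is additive}, which already provides the additive law for the $u_{a,b}$'s; pairing $C$ with $u$ is precisely what makes the single equation $C^*(a,c)=s^*(C^*(a,b),C^*(b,c))$ simultaneously recover the original color $C(a,c)=u_{a,b}(C(b,c))$ and the next function by composition.
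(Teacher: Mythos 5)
Your proof is correct, and it differs from the paper's in a real (if modest) structural way. Both arguments rest on the same core: the transition maps $u_{a,b}$ of Notation~\ref{notation local additivity} and the composition law $u_{a,b}\circ u_{b,c}=u_{a,c}$ of Claim~\ref{C^* is additive}, together with the cardinality estimate that the partial functions on $D$ number at most $2^\mu$ when $\mu$ is infinite (and finitely many otherwise), so the auxiliary color set has size $\le\kappa$. The paper, however, sets $C^*(a,b)=u_{a,b}$ alone and then applies $F\to(F)_{add_\kappa}$ \emph{twice}: the first application yields $B\in F$ on which all transition maps equal one fixed $f$, which makes $C\restriction[B]^2$ itself additive (with transition function $f$), and the second application, applied to $C\restriction[B]^2$, produces the $C$-homogeneous set $E\subseteq B$. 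You instead pair the color with the transition map, $C^*(a,b)=(C(a,b),u_{a,b})$, define the transition function $s^*((x,f),(y,g))=(f(y),f\circ g)$, and get homogeneity for $C$ in a single application, since constancy of the first coordinate on $[E]^2$ is precisely $C$-homogeneity. Your version is a bit slicker; the paper's two-step version has the small advantage of making visible an intermediate fact of independent interest, namely that after one pass the restriction of $C$ becomes an honest additive coloring. As a tiny point of hygiene, when you say "padding $D^*$ out to size $\kappa$," note that the same padding is implicitly done in the paper; under the paper's conventions a coloring with fewer colors can be viewed as a coloring with $\kappa$ colors by enlarging the codomain, and this affects neither additivity nor the homogeneity conclusion.
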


\begin{proof}
Let $A \in F$ and $C:[A]^2 \to \mu$ be a locally additive coloring. For each $a<b$ in $A$ let $u_{a,b}$ be the function that is defined in Notation \ref{notation local additivity}.
Define:
\[
D^*=\bigcup_{D^- \subseteq \mu} \ ^{D^-} \mu
\]
(the set of functions of subsets of $\mu$ into $\mu$).  We define a new coloring
\begin{align*}
&C^*:[A]^2 \to D^* \\
&C^*(a,b)=u_{a,b} \text{ where } a<b
\end{align*}
By Claim \ref{C^* is additive}, $C^*$ is an additive coloring. Since $|D^*| \leq \kappa$, we can apply the assumption,
\[
F \rightarrow (F)_{add_\kappa}
\]
on the additive coloring $C^*$. So there is a subset $B$ of $A$ so that
$B \in F$ and $C^* \restriction [B]^2$ is constant. Let $f$ be the function, such that $C^*(a,b)$ equals $f$, whenever $a<b$ are in $B$.

Now for all $a<b<c$ are in $B$ the following holds:
\[
C(a,c)=C^*(a,b)(C(b,c))=f(C(b,c))
\]
So $C \restriction B$ is additive. We apply again the assumption,
\[
F \rightarrow (F)_{add_\kappa}
\]
to get a subset $E$ of $B$ such that
$E \in F$
and $C \restriction [E]^2$ is constant.
\end{proof}

\begin{corollary}\label{generalizing add to loc for strong limit cardinal}
Let $\mu$ be a strong limit cardinal and let $F$ be a family of subsets of a linearly ordered set.
Assume that
\[
F \rightarrow (F)_{add_{<\mu}}
\]

Then:
\[
F \rightarrow (F)_{la_{<\mu}}
\]

\end{corollary}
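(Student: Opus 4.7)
The plan is to reduce Corollary 3.2 directly to Theorem 3.1 by exploiting the defining property of a strong limit cardinal. Let $A\in F$ and let $C:[A]^2\to\nu$ be a locally additive coloring with $\nu<\mu$ colors; our goal is to produce some $E\in F$ with $E\subseteq A$ on which $C$ is constant.

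Since $\mu$ is a strong limit cardinal and $\nu<\mu$, we have $2^{\nu}<\mu$, and hence $\kappa:=2^{\nu}+\aleph_0<\mu$ as well. In particular the hypothesis $F\to (F)_{add_{<\mu}}$ implies the more restricted statement $F\to (F)_{add_{\kappa}}$, since any additive coloring using exactly $\kappa$ colors is in particular an additive coloring using fewer than $\mu$ colors. Thus the hypotheses of Theorem \ref{theorem generalizing add to la} are satisfied for the pair $(\kappa,\nu)$: we have $2^{\nu}+\aleph_0\leq\kappa$ and $F\to (F)_{add_{\kappa}}$.

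Applying Theorem \ref{theorem generalizing add to la} then yields $F\to (F)_{la_{\nu}}$. Since $C\in la_\nu$, this gives the desired homogeneous $E\in F$ for $C$, and as $C\in la_{<\mu}$ was arbitrary we conclude $F\to (F)_{la_{<\mu}}$.

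There is essentially no obstacle here: the corollary is a clean packaging of Theorem \ref{theorem generalizing add to la}, and the only content is the observation that ``strong limit'' is exactly what is needed to turn the per-color hypothesis $2^{\nu}+\aleph_0\leq \kappa$ of that theorem into a uniform bound below $\mu$. The one point deserving a line of care is simply to spell out that $F\to (F)_{add_{<\mu}}$ entails $F\to (F)_{add_{\kappa}}$ for each $\kappa<\mu$.
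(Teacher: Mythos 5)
Your proof is correct and is precisely the intended deduction: the corollary reduces to Theorem~\ref{theorem generalizing add to la} exactly as you describe, and the only real content is the observation that ``strong limit'' converts the hypothesis $2^{\nu}+\aleph_0\leq\kappa$ into a uniform bound below $\mu$.

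One small caveat is worth recording. The step $\kappa:=2^{\nu}+\aleph_0<\mu$ tacitly assumes $\mu$ is uncountable; under the standard convention that $\aleph_0$ is a strong limit cardinal, the case $\mu=\aleph_0$ (so $\nu$ finite) gives $\kappa=\aleph_0=\mu$, and then the passage from $F\to (F)_{add_{<\mu}}$ to $F\to (F)_{add_{\kappa}}$ breaks down since $\kappa$ is no longer $<\mu$. That borderline case is exactly Corollary~\ref{generalizing add to loc for finite}, and it cannot be obtained by quoting the \emph{statement} of Theorem~\ref{theorem generalizing add to la}; one must instead inspect its \emph{proof} and note that when $\nu$ is finite the auxiliary set $D^*=\bigcup_{D^-\subseteq\nu}{}^{D^-}\nu$ is itself finite, so only finitely many colors are needed for the additive coloring $C^*$. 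For uncountable strong limit $\mu$, which is the natural reading here given that the paper records the finite case separately, your argument is complete.
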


\begin{corollary}\label{generalizing add to loc for finite}
Let $F$ be a family of subsets of a linearly ordered set.
Assume that
\[
F \rightarrow (F)_{add_{<\aleph_0}}
\]

Then:
\[
F \rightarrow (F)_{la_{<\aleph_0}}
\]
\end{corollary}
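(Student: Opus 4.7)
The plan is to imitate the proof of Theorem \ref{theorem generalizing add to la}, noting that the only reason the theorem demands $2^\mu+\aleph_0\leq\kappa$ with $\kappa$ a given cardinal is to bound $|D^*|$; when $\mu$ is finite, the set $D^*$ of partial functions from $\mu$ to $\mu$ is automatically finite, so the single appeal to an infinite-color additive partition relation used in the theorem can be replaced by two appeals to the hypothesized relation $F\to (F)_{add_{<\aleph_0}}$.

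Concretely, I would fix $A\in F$ and a locally additive coloring $C:[A]^2\to\mu$ with $\mu<\aleph_0$. Following the theorem, form
\[
D^*=\bigcup_{D^-\subseteq \mu} {}^{D^-}\mu
\]
and define $C^*(a,b)=u_{a,b}$ via Notation \ref{notation local additivity}. By Claim \ref{C^* is additive} this $C^*$ is additive, and because $\mu$ is finite so is $D^*$; thus $C^*\in add_{<\aleph_0}$. A first application of $F\to(F)_{add_{<\aleph_0}}$ produces $B\in F$ with $B\subseteq A$ on which $C^*$ is constant, say $C^*(a,b)=f$ for all $a<b$ in $B$. Then exactly as in the theorem, for $a<b<c$ in $B$,
\[
C(a,c)=u_{a,b}(C(b,c))=f(C(b,c)),
\]
so $C\restriction[B]^2$ is additive with witness $f$ and still takes only finitely many values. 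A second application of $F\to(F)_{add_{<\aleph_0}}$, now to this additive coloring on $B$, yields the desired $E\in F$ with $E\subseteq B$ and $C\restriction[E]^2$ constant.

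There is essentially no obstacle: the only point to check is the finiteness of $D^*$, which is immediate as it is a finite union of function spaces between finite sets. One could also observe that $\aleph_0$ is a strong limit cardinal and deduce the corollary directly from Corollary \ref{generalizing add to loc for strong limit cardinal}, but the two-step imitation above keeps the argument self-contained and parallels the proof of the theorem verbatim.
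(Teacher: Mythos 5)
Your proof is correct, and you have put your finger on exactly the subtlety that prevents the corollary from being a purely formal consequence of Theorem~\ref{theorem generalizing add to la}: the hypothesis there is $2^\mu+\aleph_0\le\kappa$, and the ``$+\aleph_0$'' forces $\kappa$ to be infinite, whereas the assumed relation $F\to(F)_{add_{<\aleph_0}}$ only covers finitely many colors. Your two-step re-run of the theorem's argument resolves this cleanly: the proof of the theorem only ever needs $\kappa\ge|D^*|$, and for finite $\mu$ the set $D^*=\bigcup_{D^-\subseteq\mu}{}^{D^-}\mu$ has exactly $(\mu+1)^\mu$ elements, so both applications of the partition relation can be made with a finite number of colors. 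The rest of your argument — forming $C^*(a,b)=u_{a,b}$, using Claim~\ref{C^* is additive} to get additivity, extracting $B$ and then $E$ — is verbatim the proof of the theorem and is correct.

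One small caution on your parenthetical alternative: appealing to Corollary~\ref{generalizing add to loc for strong limit cardinal} with $\mu=\aleph_0$ runs into the same issue, because for a finite number $n$ of colors the theorem asks for $\kappa\ge 2^n+\aleph_0=\aleph_0$, and $F\to(F)_{add_{<\aleph_0}}$ does not yield $F\to(F)_{add_{\aleph_0}}$. So that corollary, too, is only a genuine consequence of the theorem for uncountable strong limit $\mu$; at $\mu=\aleph_0$ it needs precisely the sharpening you carried out. Your self-contained version is therefore the safer route, and I would keep it as your main argument rather than the alternative.
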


In our notation, Shelah proved the following fact:
\begin{fact}\cite[Theorem 1.3]{sh42}\label{fact dense}
Let $I$ be a dense linear ordering and let $F$ be the family of subsets of $I$ that are dense in some interval of $I$. Then:
\[
F \rightarrow (F)_{add_{<\aleph_0}}
\]
\end{fact}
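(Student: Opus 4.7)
The plan is to proceed by induction on the number of colors $n$, with the $n=1$ case being vacuous. So assume the result for additive colorings with fewer than $n$ colors, and let $C:[A]^2\to D$ be an additive coloring with witness $s:D\times D\to D$ and $|D|=n$, where $A$ is dense in some interval $J\subseteq I$.

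The key technique is a density--pigeonhole step. Pick $a_0\in A\cap J$ and consider, for each color $d\in D$, the fiber $B_d:=\{b\in A\cap J:b>a_0,\ C(a_0,b)=d\}$. I claim there exist a color $d^*\in D$ and a subinterval $J^*\subseteq J$ with $a_0<J^*$ such that $B_{d^*}$ is \emph{dense} in $J^*$. Otherwise, for each color one could find a further subinterval disjoint from the corresponding fiber; iterating this $n$ times produces an open subinterval of $J$ disjoint from every $B_d$, hence disjoint from $A\cap J$, contradicting density of $A$ in $J$. So $B_{d^*}\in F$, and additivity forces, for all $b_1<b_2$ in $B_{d^*}$,
\[
d^*=C(a_0,b_2)=s(C(a_0,b_1),C(b_1,b_2))=s(d^*,C(b_1,b_2)),
\]
so $C(b_1,b_2)\in S_{d^*}:=\{e\in D:s(d^*,e)=d^*\}$. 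If $|S_{d^*}|<n$, the induction hypothesis applied to $C\restriction [B_{d^*}]^2$ produces the required homogeneous $E\in F$ inside $B_{d^*}$.

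The remaining case is $S_{d^*}=D$, i.e., $d^*$ is a left absorbing element of the semigroup $(D,s)$. Here I would run the mirror version of the argument: pick $b_0\in A$ lying above a further subinterval of $J^*$ and pigeonhole the colors $C(a,b_0)$ for $a<b_0$ in $A$, obtaining (again by the density argument) a subinterval $J^{**}$, a color $d'$, and a dense set $B'\in F$ inside $J^{**}$ with $C(a,b_0)=d'$ for all $a\in B'$; additivity now restricts $C$ on $[B']^2$ to $S^{L}_{d'}:=\{e:s(e,d')=d'\}$. Again, either $|S^{L}_{d'}|<n$ and induction finishes, or $d'$ is right absorbing in $(D,s)$.

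The main obstacle is the doubly absorbing case, where one has elements that left-absorb and right-absorb but neither reduction applies. To handle it, I would iterate the density--pigeonhole construction to build an increasing sequence $a_0<a_1<a_2<\cdots$ in $A$ together with a nested chain of dense subsets $A\supseteq A_1\supseteq A_2\supseteq\cdots$ of shrinking intervals $J\supseteq J_1\supseteq J_2\supseteq\cdots$, each $A_k$ dense in $J_k$, with $C(a_i,b)$ constant equal to some $d_i\in D$ for all $b\in A_k$ whenever $i<k$. Applying finite pigeonhole to the sequence $(d_i)$ and using that every finite semigroup contains an idempotent, one extracts a subsequence on which the semigroup multiplication collapses; the delicate point--- and what I would expect to be the hardest to pin down cleanly--- is ensuring that this extraction is carried out within the density framework so that the resulting monochromatic set is not merely infinite but is in fact dense in some subinterval, i.e., lies in $F$.
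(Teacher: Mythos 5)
The paper does not give a proof of this Fact; it is cited from Shelah's paper. So your proposal has to be judged on its own terms. Your inductive setup and the density--pigeonhole step are both correct and natural: the finite iteration you describe does produce a fiber $B_{d^*}$ dense in a subinterval, the computation $d^*=s(d^*,C(b_1,b_2))$ giving $C\restriction[B_{d^*}]^2\subseteq S_{d^*}$ is right, the inductive reduction when $|S_{d^*}|<n$ is sound (the same witness $s$ still works, with fewer colors), and the mirror version is equally correct. Note also that the two absorbing colors must coincide: if $s(d^*,e)=d^*$ for all $e$ and $s(e,d')=d'$ for all $e$, then $d^*=s(d^*,d')=d'$, so the ``doubly absorbing case'' is really the case where $(D,s)$ has a two-sided zero $z$.

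The genuine gap is your handling of that case, and the iteration you sketch is the wrong tool. Building $a_0<a_1<\cdots$ with $C(a_i,b)$ stabilized and then extracting an idempotent via pigeonhole only yields a homogeneous chain of order type $\omega$; nothing in that construction makes the resulting set dense in a subinterval, and you correctly flag this as the unresolved point --- but it is not a ``delicate point'' to be finessed, it is a dead end for this route. What closes the argument is much shorter. If $C(a,b)=z$ for all $a<b$ in $A$, then $A$ itself is homogeneous. Otherwise choose $a_1<b_1$ in $A$ with $C(a_1,b_1)\neq z$. Then for any $a_1<c<c'<b_1$ in $A$, the color $C(c,c')$ cannot equal $z$: if it did, additivity and two-sided absorption would force $C(c,b_1)=s(z,C(c',b_1))=z$ and then $C(a_1,b_1)=s(C(a_1,c),z)=z$, a contradiction. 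Hence $C$ omits the color $z$ on $[A\cap(a_1,b_1)]^2$, and since (after replacing $A$ by $A\cap J$) the set $A\cap(a_1,b_1)$ is dense in the nondegenerate subinterval $(a_1,b_1)$, it lies in $F$; the restricted coloring is additive with at most $n-1$ colors, and the induction hypothesis finishes. With your last paragraph replaced by this observation, the proof is complete.
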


Theorem \ref{generalizing add to loc less than mu colors} and Fact \ref{fact dense} yield:
\begin{corollary}
Let $I$ be a dense linear ordering and let $F$ be the family of subsets of $I$ that are dense in some interval of $I$.  Then:
\[
F \rightarrow (F)_{la_{<\aleph_0}}
\]
\end{corollary}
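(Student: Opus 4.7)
The plan is to combine the two preceding ingredients: Fact~\ref{fact dense} supplies $F \rightarrow (F)_{add_{<\aleph_0}}$, and Corollary~\ref{generalizing add to loc for finite} (the finite-color instance of Theorem~\ref{generalizing add to loc less than mu colors}) gives the general implication $F \rightarrow (F)_{add_{<\aleph_0}} \Rightarrow F \rightarrow (F)_{la_{<\aleph_0}}$ for any family $F$ over a linearly ordered set. Plugging the former into the latter yields the claim; there is nothing new to prove.

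If one wishes to spell out the reduction without invoking the corollary, the argument runs in parallel to the proof of Theorem~\ref{generalizing add to loc less than mu colors}. Given a locally additive coloring $C:[A]^2 \to n$ with $A \in F$ and $n$ finite, form the auxiliary coloring $C^*(a,b) := u_{a,b}$ valued in the (finite) set $D^*$ of partial functions $n \to n$. By Claim~\ref{C^* is additive} the coloring $C^*$ is additive, so Fact~\ref{fact dense} applied to $C^*$ produces a subset $B \subseteq A$ with $B \in F$ on which $C^*$ is constantly equal to some fixed $f$. On such a $B$ we have $C(a,c) = u_{a,b}(C(b,c)) = f(C(b,c))$ for all $a<b<c$ in $B$, so $C\restriction[B]^2$ is itself an additive coloring in $n$ colors. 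A second application of Fact~\ref{fact dense} then yields $E \in F$ with $E \subseteq B$ on which $C$ is constant.

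The only point worth flagging is that Fact~\ref{fact dense} must be read for \emph{every} finite alphabet (in particular for $D^*$, which is larger than the original alphabet $n$), rather than just for the alphabet of the given coloring. This is automatic from the $add_{<\aleph_0}$ formulation, so no genuine obstacle arises; the corollary is a clean formal consequence of the two stated facts.
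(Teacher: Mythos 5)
Your proposal is correct and follows the same route as the paper: the paper's proof is exactly the one-line combination of Fact~\ref{fact dense} with Corollary~\ref{generalizing add to loc for finite}. Your optional unpacking faithfully reproduces the argument of Theorem~\ref{generalizing add to loc less than mu colors} in the finite case (where $|D^*|$ is still finite, so a single quantification over all finite alphabets suffices), and the subtlety you flag is handled exactly as you say.
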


\begin{proof}
By Fact \ref{fact dense} and Corollary \ref{generalizing add to loc for finite}
\end{proof}

\section{Trees and Colorings}

In this section we study two functions: one assigns to each coloring $C$ a tree $T(C)$ and the other assigns to each tree $T$ with an enumeration $\{t^i_\alpha:i<\mu_\alpha\}$ for each level of $T$, a locally additive coloring $C(T,e)$. The function $C \mapsto T(C)$ is not injective. By Theorem \ref{T isomorphic to T(C(T,e))}, if $e$ covers $T$ (see the definition below) then the tree $T(C(T,e))$ is isomorphic to its history tree, so it does not depend on $e$.
$\\$

Recall some basic notions and notations about trees. A \emph{tree} is a partial-order $(T,\leq_{T})$ such  that for each $t \in T$ the set $\{s \in T:s<_{T}t\}$ is well-ordered by $\leq_{T}$. A \emph{branch} in $T$ is a maximal linearly ordered subset of $T$. 
A \emph{cofinal branch} in $T$ is a branch whose order type is the height of $T$.

For a tree $T$ and an ordinal $\alpha$, let $Lev_\alpha(T)$ denote the $\alpha$-th level of $T$ and let $Lev_{<\alpha}(T)$ denote the subtree of $T$, consisting the nodes of level smaller than $\alpha$. We denote the height of $t \in T$ in $T$ by $ht(t,T)$ and $ht(T)$ is the first $\alpha$ so that $Lev_\alpha(T)=\emptyset$. A tree $T$ is said to be \emph{well-pruned} if $Lev_0(T)$ has only one element and we have:
\[
\forall x \in T \forall \alpha [ht(x,T)<\alpha<ht(T) \rightarrow \exists y \in Lev_\alpha(T)(x<_T y)]
\]
A \emph{leaf} in a tree is a maximal element in the tree. Clearly, every well-pruned tree of limit ordinal height has no leaves.

\subsection{Reconstructing a Tree from its History}

\begin{definition}
For a tree $T$ and $t \in T$, let $f_t:ht(t,T) \to T$ be the function such that $f_t(\alpha)$ is the unique $t'<_Tt$ in $Lev_\alpha(T)$. \emph{The history tree of $T$} is the tree
\[
F_T=(\{f_t:t \in T\},\subseteq)
\]
\end{definition}

\begin{definition}
For a tree $T$, let
$T^{succ}$ be the subtree of $T$ induced by the set
\[
\{t \in T:ht(t,T) \text{ is a successor ordinal}\}
\]
\end{definition}
Clearly,
\[
(F_T)^{succ}=(\{f_t:t \in T^{succ}\},\subseteq)
\]
(as $ht(f_t,F_T)=ht(t,T)$).

\begin{remark}\label{remark 3.3}
Let $T,S$ be trees. If $g:F_T \to F_S$ is an isomorphism then
$g \restriction (F_T)^{succ}$ is an isomorphism from $(F_T)^{succ}$ to $(F_S)^{succ}$.
\end{remark}

\begin{proposition}\label{proposition without leaves}
Let $T$ be a tree without leaves. Then $T$ is isomorphic to the tree $(F_T)^{succ}$.
\end{proposition}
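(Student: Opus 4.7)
The plan is to exhibit an explicit isomorphism $h:T\to (F_T)^{succ}$. Given $t\in T$, let $h(t)$ be the function with domain $ht(t,T)+1$ that extends $f_t$ by setting $h(t)(ht(t,T))=t$. The first task is to check that $h(t)$ really lies in $F_T$, i.e.\ that $h(t)=f_{t'}$ for some $t'\in T$.

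The crucial observation is that in any tree, leaflessness is equivalent to the statement that every node has an immediate successor. Indeed, if some $s>_T t$ exists, then $s$ has a unique predecessor $t'$ at level $ht(t,T)+1$; by linearity of the set of predecessors of $s$, $t'$ is $>_T t$, and $t'$ is an immediate successor of $t$ since its level is $ht(t,T)+1$. Now pick any such immediate successor $t'$ of $t$: then $f_{t'}$ has domain $ht(t,T)+1$, agrees with $f_t$ on $ht(t,T)$, and satisfies $f_{t'}(ht(t,T))=t$. Hence $f_{t'}=h(t)$, independently of the choice of $t'$. This shows $h(t)\in F_T$, and since $ht(h(t),F_T)=ht(t',T)=ht(t,T)+1$ is a successor ordinal, in fact $h(t)\in (F_T)^{succ}$.

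Next I would verify that $h$ is a bijection. Injectivity is immediate because $\dom(h(t))$ is the successor $ht(t,T)+1$, so $ht(t,T)$ can be read off and then $t=h(t)(ht(t,T))$. For surjectivity, given $f_s\in (F_T)^{succ}$, write $ht(s,T)=\alpha+1$ and set $t=f_s(\alpha)$: then $s$ is an immediate successor of $t$, so $h(t)=f_s$ by the previous paragraph.

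Finally, $h$ preserves the order in both directions. If $t_1<_T t_2$, then $ht(t_1,T)<ht(t_2,T)$ and $f_{t_2}(ht(t_1,T))=t_1$, which together with the defining rule for $h$ yield $h(t_1)\subsetneq h(t_2)$; conversely, $h(t_1)\subsetneq h(t_2)$ forces $ht(t_1,T)<ht(t_2,T)$ and $t_1=h(t_2)(ht(t_1,T))$, so $t_1<_T t_2$. I do not anticipate any real obstacle; the only substantive step is the opening equivalence between leaflessness and existence of immediate successors, after which the argument is a straightforward level-by-level bookkeeping.
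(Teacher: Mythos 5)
Your proof is correct and uses essentially the same map $h$ as the paper: the paper defines $h(t)=f_{t^*}$ for some $t^*>_T t$ at level $ht(t,T)+1$, and you just define the same object directly as the one-point extension of $f_t$ before identifying it with $f_{t'}$ for an immediate successor $t'$. The only difference is that you spell out the existence of immediate successors and the bijection/order checks that the paper compresses into ``Clearly, $h$ is an isomorphism.''
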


\begin{proof}
Define a function $h:T \to (F_T)^{succ}$ as follows:
$h(t)$ is the unique function whose domain is $ht(t,T)+1$ such that $h(t)$ is $f_{t^*}$ for some $t^*>_T t$ with $ht(t^*,T)=ht(t,T)+1$. Since $T$ has no leaves, for each $t \in T$ we can find $t^*$. By the definition of a tree, $h(t)$ does not depend on the choice of $t^*$. Clearly, $h$ is an isomorphism.
\end{proof}

\begin{corollary}\label{corollary reconstruction a tree from its history}
Let $T,S$ be two trees without leaves. Then the trees $T,S$ are isomorphic if and only if the history trees $F_T,F_S$ are isomorphic.
\end{corollary}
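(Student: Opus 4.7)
The plan is to prove the two directions separately, with most of the work already done by the preceding two results. The reverse direction ($F_T \cong F_S \Rightarrow T \cong S$) is an immediate assembly, while the forward direction requires a short but honest construction.

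For the reverse direction, suppose $g \colon F_T \to F_S$ is an isomorphism of history trees. By Remark \ref{remark 3.3}, the restriction $g \restriction (F_T)^{succ}$ is an isomorphism from $(F_T)^{succ}$ onto $(F_S)^{succ}$. Since both $T$ and $S$ are assumed to have no leaves, Proposition \ref{proposition without leaves} gives isomorphisms $T \cong (F_T)^{succ}$ and $S \cong (F_S)^{succ}$. Composing these three isomorphisms yields $T \cong S$.

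For the forward direction, suppose $\phi \colon T \to S$ is an isomorphism of trees. I would define $g \colon F_T \to F_S$ by post-composition with $\phi$, that is, $g(f) = \phi \circ f$. The key verification is that for every $t \in T$, one has $\phi \circ f_t = f_{\phi(t)}$: both sides are functions with domain $ht(t,T) = ht(\phi(t),S)$, and for each $\alpha$ in that domain, $(\phi \circ f_t)(\alpha) = \phi(f_t(\alpha))$ is the image under $\phi$ of the unique predecessor of $t$ at level $\alpha$, which by order- and level-preservation of $\phi$ is the unique predecessor of $\phi(t)$ at level $\alpha$, namely $f_{\phi(t)}(\alpha)$. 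Hence $g$ maps $F_T$ into $F_S$; applying the same argument to $\phi^{-1}$ shows that $f \mapsto \phi^{-1} \circ f$ is a two-sided inverse. Finally, for any two functions $f, f'$, the relation $f \subseteq f'$ is equivalent to $\phi \circ f \subseteq \phi \circ f'$, so $g$ preserves the tree order. Thus $g$ is an isomorphism $F_T \cong F_S$.

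There is no real obstacle: the only nontrivial content was absorbed into Proposition \ref{proposition without leaves} (which is why the no-leaves hypothesis is essential, as siblings at a limit level of the history tree collapse otherwise) and Remark \ref{remark 3.3}. The forward direction is essentially functoriality of the $T \mapsto F_T$ construction.
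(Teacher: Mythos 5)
Your proof is correct and follows the paper's own argument for the nontrivial direction: assemble Remark \ref{remark 3.3} with Proposition \ref{proposition without leaves} to pass from $F_T\cong F_S$ to $(F_T)^{succ}\cong(F_S)^{succ}$ to $T\cong S$. The only difference is that you also spell out the forward direction (post-composition $f\mapsto\phi\circ f$), which the paper dismisses as trivial; that computation is fine and does not even use the no-leaves hypothesis (only the reverse direction does, via Proposition \ref{proposition without leaves}).
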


\begin{proof}
For the non-trivial direction, assume that $F_T \cong F_S$. Then by Remark \ref{remark 3.3}, $(F_T)^{succ} \cong (F_S)^{succ}$ and so by Proposition \ref{proposition without leaves}, $T \cong S$.
\end{proof}

\subsection{The Tree of a Coloring}
The following definition assigns to each coloring a tree.
\begin{definition}\label{definition T(C)}\cite[Page 7]{ri:chain}
Let $\kappa$ and $\mu$ be cardinals and let $C:[\kappa]^2 \to \mu$ be a coloring. For every $\delta<\kappa$, consider the induced fiber map $C(\cdot,\delta):\delta\rightarrow\mu$ which satisfies $C(\cdot,\delta)(\alpha)=C(\alpha,\delta)$ for all $\alpha<\delta$.
Then, consider the following downward-closed subtree of $({}^{<\kappa}\mu,\subseteq)$:
$$  T(C):=\{ C(\cdot,\delta)\restriction\beta\mid \beta\le\delta<\kappa\}.
$$
\end{definition}

\begin{remark}
For each $f:\kappa \to 2$, define a coloring:
\begin{align*}
&C_f:[\kappa]^2 \to 2\\
&C_f(\alpha,\beta)=f(\alpha)
\end{align*}
Let $\mathcal{C}$ be the set of colorings $C_f$ where $f:\kappa \to 2$ is a function. Note that $\mathcal{C}$ is both locally-additive and $*$-locally-additive. All the trees $T(C)$ for $C \in \mathcal{C}$ are isomorphic to $(\kappa,\in)$.
\end{remark}

\begin{remark}
For each $f:\kappa \to 2$, define a coloring:
\begin{align*}
&C_f:[\kappa]^2 \to 2\\
&C_f(\alpha,\beta)=f(\beta)
\end{align*}
Let $\mathcal{C}$ be the set of colorings $C_f$ where $f:\kappa \to 2$ is a function which is not eventually constant. Note that $\mathcal{C}$ is both locally-additive and $*$-locally-additive. For each $C \in \mathcal{C}$, we have:
\[
T(C)=^{<\kappa}\{0\} \cup ^{<\kappa}\{1\}
\]
\end{remark}

\begin{question}
Find a characterization of the equivalence classes of locally additive colorings induced by the map $C \mapsto T(C)$.
\end{question}

\subsection{The Locally Additive Coloring of an Enumerated Tree}
\begin{definition}\label{definition enumeration}
Let $T$ be a $\kappa$-tree. For each $\alpha<\kappa$, let
\[
\langle t^i_\alpha:i<\mu_\alpha \rangle
\]
 be a bijective enumeration of $Lev_\alpha(T)$. A sequence
\[
e=\langle \langle t^i_\alpha:i<\mu_\alpha \rangle:\alpha<\kappa \rangle
\]
of enumerations of the levels of $T$
is called an \emph{enumeration} of $T$.
\end{definition}

\begin{definition}\label{definition C(T,e)}
Let $T$ be a $\kappa$-tree and let $e$ be an enumeration of $T$ as given by Definition \ref{definition enumeration}. Let $\mu$ be the supremum of $\{\mu_\alpha:\alpha<\kappa\}$. Let
\[
C(T,e):[\kappa]^2 \to \mu
\]
be the coloring defined by:
\[
C(T,e)(\alpha,\delta)=i
\]
if and only if $\alpha<\delta$ and $i<\mu$ is the unique ordinal such that $t^i_\alpha \leq_T t^0_\delta$.
\end{definition}

\begin{claim}\label{claim C(T,e) is locally additive}
For each $\kappa$-tree $T$ and an enumeration $e$ of $T$, the coloring $C(T,e)$ is locally additive.
\end{claim}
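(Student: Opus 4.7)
The plan is to unwind the definition of local additivity and reduce the claim to the fact that in a tree, the predecessors of any node form a chain, so a node at a higher level determines a node at each lower level uniquely.

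First I would fix $a<b<c<d$ in $\kappa$ and assume $C(T,e)(b,c)=C(T,e)(b,d)=i$. Unpacking Definition \ref{definition C(T,e)}, this says that $t^i_b$ is the unique node at level $b$ satisfying $t^i_b\le_T t^0_c$ and also the unique node at level $b$ satisfying $t^i_b\le_T t^0_d$. So the single node $t^i_b\in Lev_b(T)$ lies below both $t^0_c$ and $t^0_d$. Next I would let $j<\mu$ be the index such that $C(T,e)(a,c)=j$, i.e.\ $t^j_a\le_T t^0_c$, with $t^j_a\in Lev_a(T)$.

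Now comes the key step: since $t^j_a$ and $t^i_b$ both lie in the chain $\{x\in T:x\le_T t^0_c\}$, and $a<b$, we have $t^j_a<_T t^i_b$. Combining with $t^i_b\le_T t^0_d$ gives $t^j_a\le_T t^0_d$. By uniqueness of the predecessor of $t^0_d$ at level $a$, this forces $C(T,e)(a,d)=j=C(T,e)(a,c)$, which is exactly local additivity.

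There is no real obstacle here — the whole content is that the definition of $C(T,e)(\alpha,\delta)$ reads off the level-$\alpha$ ancestor of a fixed representative $t^0_\delta$, and agreement of ancestors at a higher level $b$ propagates downward to every lower level $a$. I would keep the write-up short, essentially in the three steps above.
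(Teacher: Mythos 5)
Your proof is correct and takes essentially the same route as the paper: both hinge on the observation that predecessors of a fixed node form a chain, so the level-$a$ ancestor of $t^0_c$ sits below the common level-$b$ ancestor $t^i_b$, and then below $t^0_d$, forcing the level-$a$ colors to agree. The paper phrases the last step by comparing the two level-$a$ nodes $t^{C(a,c)}_a$ and $t^{C(a,d)}_a$ directly and invoking injectivity of the enumeration, but the content is identical.
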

\begin{proof}
Denote $C=C(T,e)$. Let $\alpha<\beta<\gamma_1,\gamma_2<\kappa$. We have to show:
\[
C(\beta,\gamma_1)=C(\beta,\gamma_2) \Rightarrow C(\alpha,\gamma_1)=C(\alpha,\gamma_2)
\]
Let
\[
i=C(\beta,\gamma_1)=C(\beta,\gamma_2)
\]
So
\begin{align*}
&t^i_\beta \leq_T t^0_{\gamma_1},t^0_{\gamma_2}\\
&t^{C(\alpha,\gamma_1)}_\alpha \leq_T t^0_{\gamma_1}\\
&t^{C(\alpha,\gamma_2)}_\alpha \leq_T t^0_{\gamma_2}
\end{align*}
Since $\alpha<\beta$, we have:
\[
t^{C(\alpha,\gamma_1)}_\alpha,t^{C(\alpha,\gamma_2)}_\alpha \leq_T t^i_\beta
\]
Since $t^{C(\alpha,\gamma_1)}_\alpha,t^{C(\alpha,\gamma_2)}_\alpha$ are of the same level and comparable, we have:
\[
t^{C(\alpha,\gamma_1)}_\alpha=t^{C(\alpha,\gamma_2)}_\alpha
\]
By the injectivity of the enumeration of each level, we have:
\[
C(\alpha,\gamma_1)=C(\alpha,\gamma_2)
\]

\end{proof}

\begin{definition}
Let $e$ be an enumeration of a $\kappa$-tree $T$. We say that $e$ \emph{covers} $T$ if for every $t \in T$ for some $\delta<\kappa$, we have $t <_T t_\delta^0$.
\end{definition}

\begin{remark}
For every cardinal $\kappa$ and a well-pruned $\kappa$-tree $T$, there is an enumeration $e$ of $T$ that covers $T$.
\end{remark}

\begin{proof}
Fix a bijection $f:\kappa \to T$. We define $\delta_\gamma<\kappa$ and choose $t^0_{\delta_\gamma}$ by induction on $\gamma$ in the following way:
\begin{align*}
&\delta_\gamma=\max(ht(f(\gamma),T),\sup\{\delta_\beta:\beta<\gamma\})+1 \\
&f(\gamma) <_T t^0_{\delta_\gamma} \text{ and } ht(t^0_{\delta_\gamma},T)=\delta_\gamma
\end{align*}
  Since $\kappa$ is regular, $\delta_\gamma<\kappa$. Note that the function $\gamma \mapsto \delta_\gamma$ is increasing and so injective. For each $\delta \notin \{\delta_\gamma:\gamma<\kappa\}$ we choose $t^0_\delta$ arbitrarily. It remains to choose an enumeration $\langle t^i_\alpha:0<i<\mu_\alpha \rangle$ of $Lev_\alpha(T) \setminus \{t^0_\alpha\}$ for each $\alpha<\kappa$. Clearly, the enumeration $e$ covers $T$.
\end{proof}

\begin{theorem}\label{T isomorphic to T(C(T,e))}
Let $T$ be a $\kappa$-tree and let $e=\langle \langle t^i_\alpha:i<\mu_\alpha \rangle:\alpha<\kappa \rangle$ be an enumeration of $T$ as given by Definition \ref{definition enumeration}. If $e$ covers $T$ then $T(C(T,e))$ is isomorphic to the history tree of $T$, $\{f_t:t \in T\}$.
\end{theorem}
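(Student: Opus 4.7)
The plan is to define an explicit order-isomorphism $\pi \colon F_T \to T(C(T,e))$ that replaces each ``tree-element value'' of $f_t$ by its enumeration index. For $t \in T$, I would let $\pi(f_t) \colon ht(t,T) \to \mu$ be the function sending each $\alpha < ht(t,T)$ to the unique $i < \mu_\alpha$ with $t^i_\alpha = f_t(\alpha)$. This index exists and is unique because $\langle t^i_\alpha : i < \mu_\alpha\rangle$ is a bijective enumeration of $Lev_\alpha(T)$, so $\pi(f_t)$ is well-defined as a function and depends only on $f_t$ itself.

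Next I would check that $\pi(f_t)$ genuinely lies in $T(C(T,e))$, which is the single point at which the covering hypothesis enters. Given $t \in T$, the hypothesis produces some $\delta < \kappa$ with $t <_T t^0_\delta$, whence $ht(t,T) < \delta$; for every $\alpha < ht(t,T)$ one has $f_t(\alpha) = f_{t^0_\delta}(\alpha)$, and by Definition \ref{definition C(T,e)} the common value has index $C(T,e)(\alpha,\delta)$. Therefore $\pi(f_t) = C(T,e)(\cdot,\delta)\restriction ht(t,T)$, which lies in $T(C(T,e))$.

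Bijectivity and order preservation are then routine. For injectivity, if $\pi(f_t) = \pi(f_s)$ then the domains agree, so $ht(t,T) = ht(s,T)$, and applying the bijection $i \mapsto t^i_\alpha$ pointwise recovers $f_t = f_s$. For surjectivity, given $C(T,e)(\cdot,\delta)\restriction \beta \in T(C(T,e))$ with $\beta \leq \delta$, I would set $s := t^0_\delta$ if $\beta = \delta$ and otherwise $s := f_{t^0_\delta}(\beta) \in Lev_\beta(T)$; in either case $f_s = f_{t^0_\delta}\restriction \beta$, and a direct computation gives $\pi(f_s) = C(T,e)(\cdot,\delta)\restriction \beta$. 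Order preservation $f_s \subseteq f_t \Leftrightarrow \pi(f_s) \subseteq \pi(f_t)$ is immediate from the pointwise definition of $\pi$ together with the injectivity of the indexings $i \mapsto t^i_\alpha$.

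The only delicate step is the range check in the second paragraph: the covering hypothesis is essential there, since without it some $t \in T$ might lie below no $t^0_\delta$, and then no $\delta$ would realize $\pi(f_t)$ as a fiber $C(T,e)(\cdot,\delta)\restriction ht(t,T)$, so $\pi(f_t)$ could simply fail to belong to $T(C(T,e))$. Everything else is bookkeeping about bijective enumerations and restrictions of functions.
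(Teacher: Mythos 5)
Your proposal is correct and is essentially the paper's own argument run in the opposite direction: the paper defines the map $h(C(T,e)(\cdot,\delta)\restriction\beta)=f_{t^{C(T,e)(\beta,\delta)}_\beta}$ from $T(C(T,e))$ onto the history tree, while you construct its inverse $\pi$ directly from the enumeration indices. Accordingly, the covering hypothesis, which the paper invokes to prove surjectivity of $h$, appears in your write-up as the range check that $\pi(f_t)$ is a restricted fiber $C(T,e)(\cdot,\delta)\restriction ht(t,T)$ --- the same content at the corresponding step, so the two proofs match.
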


\begin{proof}
In order to simplify the notation, we write $C$ for $C(T,e)$. For $\delta<\kappa$ we stipulate $C(\delta,\delta)=0$.
We shall prove that the following function is a well-defined isomorphism:
\begin{align*}
&h:T(C) \to \{f_t:t \in T\}\\
&h(C(\cdot,\delta) \restriction \beta)=f_{t_\beta^{C(\beta,\delta)}} \text{ for every } \beta \leq \delta<\kappa
\end{align*}
We separate the proof into several claims. First note that the level of $t_\beta^{C(\beta,\delta)}$ is $\beta$, so the domain of the function $f_{t_\beta^{C(\beta,\delta)}}$ is $\beta$.
\begin{claim}\label{claim for theorem 4.21}
Let $\alpha<\beta \leq \delta<\kappa$ be levels  of $T$. Let $i<\mu$ and let $s=t^i_\alpha$. Then
\[
(C(\cdot,\delta) \restriction \beta)(\alpha) \text{ if and only if } f_{t^{C(\beta,\delta)}_\beta}(\alpha)=t^i_\alpha.
\]
\end{claim}

\begin{proof}
We prove that the following are equivalent:
\be
\item $(C(\cdot,\delta) \restriction \beta)(\alpha)=i$
\item $C(\alpha,\delta)=i$
\item $t_\alpha^i <_T t^0_\delta$
\item $s <_T t_\delta^0$
\item $s <_T t_\beta^{C(\beta,\delta)}$
\item $f_{t^{C(\beta,\delta)}_\beta}(\alpha)=t^i_\alpha$
\ee
The equivalence between every pair of consecutive clauses holds immediately by the definitions, except the equivalence between Clauses (4) and (5). Note that $t_\beta^{C(\beta,\delta)} \leq_T t^0_\delta$. Assume that Clause (5) holds. Then by the transitivity of $\leq_T$, Clause (4) holds. Conversely, assume that Clause (4) holds. So $s$ and $t_\beta^{C(\beta,\delta)}$ are comparable. As the level of $s$ is $\alpha$ and $\alpha<\beta$, Clause (5) holds.
\end{proof}

\begin{claim}\label{claim result of claim for theorem 4.21}
Let $\beta_1 \leq \delta_1$ and $\beta_2 \leq \delta_2$ be levels of $T$. The following are equivalent:
\be
\item
$C(\cdot,\delta_1) \restriction \beta_1=C(\cdot,\delta_2) \restriction \beta_2$ (so $\beta_1=\beta_2$).
\item
$f_{t^{C(\beta_1,\delta_1)}_{\beta_1}}=f_{t^{C(\beta_2,\delta_2)}_{\beta_2}}$
\ee
Hence, the function $h$ is well-defined and injective.
\end{claim}

\begin{proof}
The equivalence holds by claim \ref{claim for theorem 4.21} and the injectivity of each enumeration in $e$.
So by its definition, $h$ is well-defined and injective.
\end{proof}

\begin{claim}
The function $h$ is an order-homomorphism.
\end{claim}

\begin{proof}

For the first direction, suppose:
 $$C(\cdot,\delta_1) \restriction \beta_1 \subseteq C(\cdot,\delta_2) \restriction \beta_2$$
We should prove:
\[
f_{t_{\beta_1}^{C(\beta_1,\delta_1)}} \subseteq f_{t_{\beta_2}^{C(\beta_2,\delta_2)}}
\]
Let $\alpha<\beta_1$. Then:
\[
(C(\cdot,\delta_1) \restriction \beta_1)(\alpha)= (C(\cdot,\delta_2) \restriction \beta_2)(\alpha)
\]
So Claim \ref{claim for theorem 4.21} yields:
\[
f_{t_{\beta_1}^{C(\beta_1,\delta_1)}}(\alpha)= f_{t_{\beta_2}^{C(\beta_2,\delta_2)}}(\alpha)
\]
The converse is similar, using the injectivity of each enumeration in $e$.
\end{proof}

\begin{claim}
The function $h$ is surjective.
\end{claim}

\begin{proof}
Let $t \in T$. Since $e$ covers $T$, we can find $\delta$ such that $t <_T t_\delta^0$. Let $\alpha$ be the level of $t$. Then $t=t_\alpha^{C(\alpha,\delta)}$. So
\[
h(C(\cdot,\delta) \restriction \alpha)=f_{t_\alpha^{C(\alpha,\delta)}}=f_t
\]
\end{proof}
The proof of Theorem \ref{T isomorphic to T(C(T,e))} has completed.
\end{proof}

\begin{corollary}
Let $T_1,T_2$ be two $\kappa$-trees without leaves and for $i=1,2$ let $e_i$ be enumerations of $T_i$ that covers $T_i$. Then the trees $T_1,T_2$ are isomorphic if and only if the trees $T(C(T_1,e_1))$ and $T(C(T_2,e_2))$ are isomorphic.
\end{corollary}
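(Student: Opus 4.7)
The plan is to observe that this corollary follows almost immediately by combining two earlier results already established in the paper: Theorem \ref{T isomorphic to T(C(T,e))}, which identifies $T(C(T,e))$ with the history tree $F_T$ when $e$ covers $T$, and Corollary \ref{corollary reconstruction a tree from its history}, which says that for trees without leaves the history tree determines the tree up to isomorphism.

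First I would apply Theorem \ref{T isomorphic to T(C(T,e))} to each side: since $e_i$ covers $T_i$ for $i=1,2$, we obtain isomorphisms
\[
T(C(T_1,e_1))\;\cong\;F_{T_1} \qquad\text{and}\qquad T(C(T_2,e_2))\;\cong\;F_{T_2}.
\]
Consequently, $T(C(T_1,e_1)) \cong T(C(T_2,e_2))$ if and only if $F_{T_1} \cong F_{T_2}$.

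Then, since $T_1$ and $T_2$ are trees without leaves, Corollary \ref{corollary reconstruction a tree from its history} gives $F_{T_1} \cong F_{T_2}$ if and only if $T_1 \cong T_2$. Chaining these two equivalences yields the claim. There is no real obstacle here: the work has already been done in Proposition \ref{proposition without leaves}, Remark \ref{remark 3.3}, and Theorem \ref{T isomorphic to T(C(T,e))}. The only point worth flagging is that the hypothesis that $T_1,T_2$ have no leaves is used precisely to invoke Proposition \ref{proposition without leaves} inside Corollary \ref{corollary reconstruction a tree from its history} (so that $T_i\cong (F_{T_i})^{succ}$), while the covering hypothesis on $e_i$ is exactly what is needed to invoke Theorem \ref{T isomorphic to T(C(T,e))}.
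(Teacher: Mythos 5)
Your proof is correct and takes exactly the same route as the paper: it chains Theorem \ref{T isomorphic to T(C(T,e))} (covering gives $T(C(T_i,e_i))\cong F_{T_i}$) with Corollary \ref{corollary reconstruction a tree from its history} (no leaves gives $F_{T_1}\cong F_{T_2}$ iff $T_1\cong T_2$). Nothing to add.
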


\begin{proof}
The following conditions are equivalent:
\be
\item
The trees $T_1$ and $T_2$ are isomorphic;
\item
the history trees of $T_1$ and $T_2$  are isomorphic; and
\item
the trees $T(C(T_1,e_1))$ and $T(C(T_2,e_2))$ are isomorphic.
\ee
The equivalence between (2) and (3) is satisfied by Theorem \ref{T isomorphic to T(C(T,e))} and the equivalence between (1) and (2) is satisfied by Corollary \ref{corollary reconstruction a tree from its history}, as the trees has no leaves.
\end{proof}

\section{The Tree Property at Successor Cardinals}\label{section the tree property at successor cardinals}
In this section, we establish a connection between two properties:
\be
\item
existence of $\kappa$-sized branches in certain trees and
\item
existence of $\kappa$-sized homogeneous sets for locally additive colorings.
\ee
As a result, we get a characterization of the tree property at a successor cardinal.
$ $\\

Recall two equivalent definitions of a weakly compact cardinal:
\begin{fact} For every strongly inaccessible cardinal $\kappa$, the following are equivalent:
\begin{enumerate}
\item There exists no $\kappa$-Aronszajn tree;
\item For every $\lambda<\kappa$, every coloring $C:[\kappa]^2\rightarrow\lambda$ admits a homogeneous set of size $\kappa$.
\end{enumerate}
\end{fact}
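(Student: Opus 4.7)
The proof plan is to transfer between the two sides of the equivalence using the constructions $T(C)$ and $C(T,e)$ developed in Section~4, together with a standard end-homogeneous/pigeonhole argument.

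For $(2)\Rightarrow(1)$ I argue by contrapositive. Suppose $T$ is a $\kappa$-Aronszajn tree; using the strong inaccessibility of $\kappa$, first reduce to the case that the level sizes of $T$ are bounded by some fixed $\lambda<\kappa$. Pick any enumeration $e=\langle\langle t^i_\alpha:i<\mu_\alpha\rangle:\alpha<\kappa\rangle$ of $T$ and form the locally additive coloring $C=C(T,e):[\kappa]^2\to\lambda$ of Claim \ref{claim C(T,e) is locally additive}. If $H\subseteq\kappa$ were a homogeneous set of size $\kappa$ with constant value $i_0<\lambda$, then for every $\alpha_1<\alpha_2$ in $H$ and any $\delta\in H$ with $\delta>\alpha_2$, the definition of $C(T,e)$ would give $t^{i_0}_{\alpha_1},t^{i_0}_{\alpha_2}\leq_T t^0_\delta$; being $\leq_T$-comparable and of levels $\alpha_1<\alpha_2$, we get $t^{i_0}_{\alpha_1}<_T t^{i_0}_{\alpha_2}$. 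Hence $\{t^{i_0}_\alpha:\alpha\in H\}$ would form a $\leq_T$-chain of order type $\kappa$, i.e., a cofinal branch of $T$, contradicting the Aronszajn property.

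For $(1)\Rightarrow(2)$, let $C:[\kappa]^2\to\lambda$ with $\lambda<\kappa$ and consider the tree $T(C)$ of Definition \ref{definition T(C)}. Strong inaccessibility gives $|Lev_\alpha(T(C))|\leq\lambda^{|\alpha|}<\kappa$ for each $\alpha<\kappa$, and $T(C)$ has height $\kappa$ since every initial segment $C(\cdot,\delta)\restriction\beta$ with $\beta\le\delta<\kappa$ lies in it; thus $T(C)$ is a $\kappa$-tree. By the tree property, pick a cofinal branch, equivalently a function $f:\kappa\to\lambda$ with $f\restriction\beta\in T(C)$ for every $\beta<\kappa$. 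Recursively construct $\langle\beta_\xi:\xi<\kappa\rangle$ as follows: set $\gamma_\xi=\sup_{\eta<\xi}(\beta_\eta+1)$, which is $<\kappa$ by regularity, and choose $\beta_\xi\geq\gamma_\xi$ with $C(\cdot,\beta_\xi)\restriction\gamma_\xi=f\restriction\gamma_\xi$; such $\beta_\xi$ exists because $f\restriction\gamma_\xi\in T(C)$. Then for every $\eta<\xi$, $C(\beta_\eta,\beta_\xi)=f(\beta_\eta)$, so the sequence is \emph{end-homogeneous}. Finally, since $\lambda<\kappa=\cf(\kappa)$, the map $\xi\mapsto f(\beta_\xi)\colon\kappa\to\lambda$ is constant equal to some $c<\lambda$ on a set $I\subseteq\kappa$ of size $\kappa$, and end-homogeneity gives $C(\beta_\eta,\beta_\xi)=c$ for all $\eta<\xi$ in $I$; so $\{\beta_\xi:\xi\in I\}$ is the desired homogeneous set.

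The crux is $(1)\Rightarrow(2)$: it uses the tree property to extract a branch $f$ and then converts $f$ into a large homogeneous set via the end-homogeneous construction together with a pigeonhole on the values of $f$. The only subtlety in $(2)\Rightarrow(1)$ is the initial reduction to a uniformly narrow Aronszajn tree, which is standard for strongly inaccessible $\kappa$. No deep obstacles arise: the argument essentially repackages the classical tree-property-implies-Ramsey proof through the constructions of Section~4.
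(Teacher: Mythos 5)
The paper records this statement as a known fact and gives no proof, so there is no in-paper argument to compare against directly; the closest analogue is the $(1)\Rightarrow(2)$ direction of Lemma~\ref{the main lemma}, which your forward direction mirrors. Your $(1)\Rightarrow(2)$ is correct: strong inaccessibility gives $|Lev_\alpha(T(C))|\le\lambda^{|\alpha|}<\kappa$, so $T(C)$ is a $\kappa$-tree with levels of size $<\kappa$; the tree property yields a branch $f$; and the recursion producing the end-homogeneous sequence $\langle\beta_\xi\rangle$, followed by a pigeonhole on $\xi\mapsto f(\beta_\xi)$, is sound. This is exactly the argument of Lemma~\ref{the main lemma}$(1)\Rightarrow(2)$, with local additivity replaced by inaccessibility as the reason the levels of $T(C)$ are small.

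The direction $(2)\Rightarrow(1)$ has a genuine gap. You replace an arbitrary $\kappa$-Aronszajn tree $T$ by one whose level sizes are bounded by a single fixed $\lambda<\kappa$, calling this ``standard for strongly inaccessible $\kappa$''; it is not. For $\kappa=\mu^+$ the bound is automatic, since levels of a $\kappa$-tree have size $<\kappa=\mu^+$, i.e.\ $\le\mu$. But for an inaccessible $\kappa$ the level sizes, while each $<\kappa$, may be cofinal in $\kappa$, and no ZFC argument converts such a tree into a $\kappa$-Aronszajn tree with uniformly bounded levels: pruning, restricting to a club of levels, or coding the tree into $2^{<\kappa}$ (whose levels can have size up to $2^{|\alpha|}$, still unbounded below $\kappa$) do not achieve a fixed bound. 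Without the reduction, $C(T,e)$ uses $\sup_{\alpha<\kappa}|Lev_\alpha(T)|$ colors, which can be $\kappa$, so hypothesis $(2)$ does not apply.

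To repair the direction you need a different coloring. The standard argument fixes a linear order $<^*$ on $T$ extending $<_T$ lexicographically (well-order each level; for incomparable nodes compare their predecessors at the first level where they diverge), enumerates $T$ as $\langle t_\xi:\xi<\kappa\rangle$ (possible since $|T|=\kappa$ by inaccessibility), and two-colors $[\kappa]^2$ according to whether the $\in$-order on indices agrees with $<^*$ on the corresponding nodes. A $\kappa$-homogeneous set gives a $<^*$-monotone $\kappa$-sequence of nodes; for each $\alpha<\kappa$ the level-$\alpha$ predecessors along this sequence are $<^*$-monotone in a set of size $<\kappa=\cf(\kappa)$, hence eventually stabilize, and these stable values form a cofinal branch. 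Alternatively, one can observe that $(2)$ gives $\kappa\rightarrow(\kappa)^2_2$ and invoke Fact~\ref{fact 6.3} to conclude $\kappa$ is weakly compact and hence has the tree property.
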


Here, we shall get a similar characterization for successor cardinals.
\begin{theorem}\label{theorem 4.2}  For every infinite cardinal $\mu$, the following are equivalent:
\begin{enumerate}
\item There exists no $\mu^+$-Aronszajn tree;
\item $\mu^+ \rightarrow (\mu^+)_{la_\mu}$, namely: Every locally additive coloring $C:[\mu^+]^2\rightarrow\mu$ admits a homogeneous set of size $\mu^+$.
\end{enumerate}
\end{theorem}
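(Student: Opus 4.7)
The plan is to prove each direction by translating between a locally additive coloring and its associated tree via the constructions of Section~4.

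For $(2)\Rightarrow(1)$, I argue contrapositively. Given a $\mu^+$-Aronszajn tree $T$ (which we may assume is well-pruned), pick a covering enumeration $e$ by the remark of the preceding subsection, and form $C:=C(T,e)$, which is a locally additive coloring from $[\mu^+]^2$ into $\mu$ by Claim~\ref{claim C(T,e) is locally additive}. Any $C$-homogeneous $H\subseteq\mu^+$ of size $\mu^+$ and constant colour $i$ would yield a chain $\{t^i_\alpha : \alpha\in H\}$ in $T$: for $\alpha_1<\alpha_2$ in $H$ and any $\beta\in H$ above them, the definition of $C(T,e)$ gives $t^i_{\alpha_1},t^i_{\alpha_2}\leq_T t^0_\beta$, so these two nodes are comparable, and their distinct levels force $t^i_{\alpha_1}<_T t^i_{\alpha_2}$. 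Since $H$ is unbounded in $\mu^+$, the resulting chain is cofinal in $T$, contradicting Aronszajnness.

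For $(1)\Rightarrow(2)$, let $C:[\mu^+]^2\to\mu$ be locally additive. By local additivity each successor level of $T(C)$ has size at most $\mu$, because a level-$(\alpha+1)$ node $C(\cdot,\delta)\restriction(\alpha+1)$ is determined by the single value $C(\alpha,\delta)\in\mu$. Limit levels, however, can exceed $\mu$, so $T(C)$ itself need not be a $\mu^+$-tree and (1) cannot be applied to it directly. The fix is to pass to
\[
T' := \{ f\in T(C) : \dom(f)\text{ is a successor ordinal and } |W_f|=\mu^+\},
\]
where $W_f:=\{\delta<\mu^+ : C(\cdot,\delta)\restriction\dom(f)=f\}$ is the fibre of $f$. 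Restricting enlarges fibres, so $T'$ is downward-closed; and at each successor ordinal $\alpha+1<\mu^+$, the $\mu^+$ ordinals $\delta>\alpha$ are partitioned into at most $\mu$ fibres, so at least one fibre must have size $\mu^+$. Re-indexing $\alpha+1\mapsto\alpha$ exhibits $T'$ as a $\mu^+$-tree; by (1) it admits a cofinal branch $\langle f^\alpha : \alpha<\mu^+\rangle$ whose fibres $W_{f^\alpha}$ have size $\mu^+$ and are nested: $W_{f^\beta}\subseteq W_{f^\alpha}$ whenever $\alpha<\beta$.

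Now recursively build a strictly increasing sequence $\langle\delta_\xi : \xi<\mu^+\rangle$ by setting $\gamma_\xi:=\sup_{\eta<\xi}(\delta_\eta+1)$ and choosing $\delta_\xi\in W_{f^{\gamma_\xi}}$ with $\delta_\xi>\gamma_\xi$; this is possible since $|W_{f^{\gamma_\xi}}|=\mu^+$ so the fibre is unbounded in $\mu^+$. For $\eta<\xi$, the nesting gives $\delta_\xi\in W_{f^{\delta_\eta}}$, i.e.\ $C(\cdot,\delta_\xi)\restriction(\delta_\eta+1)=f^{\delta_\eta}$, whence $C(\delta_\eta,\delta_\xi)=f^{\delta_\eta}(\delta_\eta)$. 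A final pigeon-hole on $\xi\mapsto f^{\delta_\xi}(\delta_\xi)\in\mu$ produces $A\subseteq\mu^+$ of size $\mu^+$ on which this value is constantly some $i$, and then $\{\delta_\xi:\xi\in A\}$ is a $C$-homogeneous set of colour $i$ and size $\mu^+$. The main obstacle I anticipate is precisely the uncontrolled size of the limit levels of $T(C)$; controlling this forces the passage to the successor/large-fibre subtree $T'$, after which the recursive construction and a pair of pigeonholes extract the homogeneous set.
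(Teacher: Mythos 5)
Your argument is correct, and your $(2)\Rightarrow(1)$ direction is essentially the paper's: fix an enumeration $e$, colour by $C(T,e)$, and read a $\mu^+$-sized homogeneous set of colour $i$ as the chain $\{t^i_\alpha:\alpha\in H\}$. (The well-prunedness and covering hypotheses you add are harmless but unnecessary; any enumeration works, since only the definition of $C(T,e)$ is used.) Where you genuinely diverge is in $(1)\Rightarrow(2)$, and the divergence rests on a misconception: you assert that local additivity only bounds the successor levels of $T(C)$ and that limit levels ``can exceed $\mu$''. In fact every level is bounded: if $\beta<\delta_1<\delta_2$ and $C(\beta,\delta_1)=C(\beta,\delta_2)$, then $C(\alpha,\delta_1)=C(\alpha,\delta_2)$ for all $\alpha<\beta$, so the node $C(\cdot,\delta)\restriction\beta$ is determined by the single colour $C(\beta,\delta)\in\mu$ whether $\beta$ is a successor or a limit; this is precisely the Claim inside Lemma \ref{the main lemma}, of which Theorem \ref{theorem 4.2} is the case $\kappa=\mu^+$. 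Accordingly, the paper applies hypothesis (1) directly to $T(C)$, takes a cofinal branch $b$, fixes an increasing continuous $f$ with $b\restriction f(i)\subseteq C(\cdot,f(i+1))$, and pigeonholes on the values of $b$. Your detour through the subtree $T'$ of successor-domain nodes with $\mu^+$-sized fibres is nevertheless valid as written: $T'$ is downward closed, each level is nonempty by regularity of $\mu^+$, a cofinal branch meets every level, the fibres along it are nested, your recursion gives $C(\delta_\eta,\delta_\xi)=f^{\delta_\eta}(\delta_\eta)$ for $\eta<\xi$, and the final pigeonhole finishes. The fibre bookkeeping is in effect a substitute for the paper's club-function extraction, and it only uses smallness of the successor levels, so it would apply verbatim to any coloring with that weaker property; but for locally additive colorings this buys nothing, and once you notice that all levels of $T(C)$ have size at most $\mu$, the passage to $T'$ can be dropped in favour of the paper's shorter extraction.
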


The preceding theorem
is a particular case of the following lemma (where $\mu^+$ stands for $\kappa$).
\begin{lemma}\label{lemma 6}\label{the main lemma} Suppose that $\kappa$ is a regular uncountable cardinal and $\mu<\kappa$. The following are equivalent:
\begin{enumerate}
\item Every tree of height $\kappa$ whose levels have size $\le\mu$ admits a $\kappa$-sized branch.
\item $\kappa \rightarrow (\kappa)_{la_\mu}$, namely: Every locally additive coloring $C:[\kappa]^2\rightarrow\mu$ admits a homogeneous set of size $\kappa$.
\end{enumerate}
\end{lemma}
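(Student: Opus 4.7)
The plan is to translate between the tree side and the coloring side via the constructions $C \mapsto T(C)$ and $(T,e) \mapsto C(T,e)$ from Section~4.

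For $(2) \Rightarrow (1)$, start with a tree $T$ of height $\kappa$ all of whose levels have size $\le \mu$, pick any enumeration $e = \langle \langle t^i_\alpha : i < \mu_\alpha \rangle : \alpha < \kappa \rangle$ (possible because each level is nonempty of size at most $\mu$), and form the coloring $C(T,e) : [\kappa]^2 \to \mu$, which is locally additive by Claim~\ref{claim C(T,e) is locally additive}. Apply $(2)$ to obtain a homogeneous set $H \subseteq \kappa$ of size $\kappa$ with some constant color $i^*$. By the very definition of $C(T,e)$, one has $t^{i^*}_\alpha \le_T t^0_\delta$ whenever $\alpha < \delta$ are both in $H$; pushing two elements $\alpha_1 < \alpha_2$ of $H$ up against any $\delta \in H$ above them shows that $t^{i^*}_{\alpha_1}$ and $t^{i^*}_{\alpha_2}$ are comparable ancestors of $t^0_\delta$, so $\{t^{i^*}_\alpha : \alpha \in H\}$ is a chain of length $\kappa$ in $T$. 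Extending to a maximal chain yields the desired $\kappa$-sized branch.

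For $(1) \Rightarrow (2)$, let $C : [\kappa]^2 \to \mu$ be locally additive and form $T(C) \subseteq {}^{<\kappa}\mu$. Since $C(\cdot,\beta+1) \restriction \beta$ lies at level $\beta$, the height of $T(C)$ is $\kappa$. The crucial step --- and the main obstacle --- is the level-size bound: for $\beta < \delta_1 < \delta_2$, local additivity applied to an arbitrary $\alpha < \beta$ gives $C(\beta,\delta_1) = C(\beta,\delta_2) \Rightarrow C(\alpha,\delta_1) = C(\alpha,\delta_2)$, so the map $\delta \mapsto C(\cdot,\delta) \restriction \beta$ on $\delta > \beta$ factors through $\delta \mapsto C(\beta,\delta)$ and has image of size at most $\mu$ (absorbing the lone exceptional node $C(\cdot,\beta) \restriction \beta$ when $\mu \ge \aleph_0$; the case of finite $\mu$ is classical). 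Hypothesis $(1)$ then provides a cofinal branch, and let $g : \kappa \to \mu$ be its union. For each $\beta < \kappa$ choose $\delta_\beta \ge \beta$ with $g \restriction \beta = C(\cdot,\delta_\beta) \restriction \beta$, and construct recursively a strictly increasing sequence $\langle h_\gamma : \gamma < \kappa \rangle$ by $h_{\gamma+1} = \delta_{h_\gamma + 1}$ and, at limit $\lambda$, $h_\lambda = \delta_{(\sup_{\gamma<\lambda} h_\gamma)+1}$; regularity of $\kappa$ keeps these suprema below $\kappa$. By the choice of the $\delta_\beta$'s, $C(h_{\gamma_1}, h_{\gamma_2}) = g(h_{\gamma_1})$ whenever $\gamma_1 < \gamma_2$. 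Since $g$ maps the $\kappa$-sized set $H = \{h_\gamma : \gamma < \kappa\}$ into $\mu < \kappa$, regularity yields a single color $c$ and a subset $H' \subseteq H$ of size $\kappa$ on which $g \equiv c$; then $H'$ is homogeneous for $C$ with color $c$.

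The heart of the argument is the level-size bound in $(1) \Rightarrow (2)$, where local additivity is used essentially to collapse what a priori could be up to $\mu^{|\beta|}$ distinct fiber maps at level $\beta$ down to $\le \mu$. The reverse direction is a direct unpacking of the definition of $C(T,e)$ and needs no combinatorics beyond the regularity of $\kappa$.
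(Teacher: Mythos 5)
Your proof is correct and follows essentially the same route as the paper: for $(1)\Rightarrow(2)$ you bound the levels of $T(C)$ via the factoring of $\delta\mapsto C(\cdot,\delta)\restriction\beta$ through $\delta\mapsto C(\beta,\delta)$, extract a cofinal branch, and thin it down using regularity of $\kappa$; for $(2)\Rightarrow(1)$ you pass to $C(T,e)$ and read a $\kappa$-chain off a homogeneous set exactly as in the paper's contrapositive argument. Your version of the branch-to-homogeneous-set step is in fact slightly more careful than the paper's (using $\delta_{h_\gamma+1}$ so that $h_\gamma$ itself lies in the restricted domain), and you correctly flag the lone level-$\beta$ node coming from $\delta=\beta$, a point the paper leaves implicit.
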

\begin{proof} $(1)\implies(2)$: Let $C:[\kappa]^2\rightarrow\mu$ be an arbitrary locally additive coloring.
\begin{claim} All the levels of $  T(C)$ have size $\le\mu$.
\end{claim}
\begin{proof} Suppose not. Pick $\beta<\kappa$ such that $|  T(C)\cap{}^\beta\mu|>\mu$.
Let $\langle \delta_i\mid i<\mu^+\rangle$
be a strictly increasing sequence of ordinals $<\kappa$ for which $i\mapsto C(\cdot,\delta_i)\restriction\beta$ is one-to-one over $\mu^+$.
Without loss of generality, $\delta_0>\beta$. Pick $i<j<\mu^+$ such that $C(\beta,\delta_i)=C(\beta,\delta_j)$.
As $C(\cdot,\delta_i)\restriction\beta\neq C(\cdot,\delta_j)\restriction\beta$, pick $\alpha<\beta$ such that $C(\alpha,\delta_i)\neq C(\alpha,\delta_j)$.
Then we arrive at the following contradiction:
$$C(\alpha,\delta_i)=u_{\alpha,\beta}(C(\beta,\delta_i))=u_{\alpha,\beta}(C(\beta,\delta_j))=C(\alpha,\delta_j).$$
\end{proof}
By (1), $  T(C)$ must admit a cofinal branch. Pick $b:\kappa\rightarrow\mu$ such that $\{ b\restriction\beta\mid \beta<\kappa\}\subseteq   T(C)$.
Fix a strictly increasing and continuous function $f:\kappa\rightarrow\kappa$ such that $f(0)=0$ and $b\restriction f(i)\subseteq C(\cdot,f(i+1))$ for all $i<\kappa$.
Pick $H\subseteq \{ f(i+1)\mid i<\kappa\}$ of size $\kappa$ on which $b$ is constant. Then $H$ is a $C$-homogeneous set of size $\kappa$.

$(\neg1)\implies(\neg2)$: Suppose that $ (T,<_T)$ is a tree of height $\kappa$ whose levels have size $\le\mu$, and $  T$ admits no $\kappa$-sized branch. We fix an enumeration $e$ of $T$ and consider the locally additive coloring $C(T,e)$ (given by Claim \ref{claim C(T,e) is locally additive}).

Towards a contradiction, suppose that $H$ is a $\kappa$-sized homogeneous set for $C(T,e)$, with value, say, $i$.
So for all $\alpha<\beta<\gamma$ from $H$ we have $t^i_\alpha\leq_T t_\gamma^0$ and $t^i_\beta\leq_T t_\gamma^0$,
and hence $t^i_\alpha\leq_T t^i_\beta$. So $\{ t^i_\alpha\mid \alpha\in H\}$ is a $\kappa$-sized chain in $  T$.
This is a contradiction.
\end{proof}

\section{Strongly Inaccessibles in the tree sense}\label{section strongly inaccessibles}
In this section, we prove that a regular cardinal that satisfies the tree property is strongly inaccessible if and only if it is strongly limit in the sense of the tree exponent.

\begin{definition}\cite{sh589}
For infinite cardinals $\mu \leq \lambda$, let $\mathfrak{T}_{\lambda,\mu}$ denote the set of trees $T$ of height $\mu$ with $\leq \lambda$ nodes. Let $Lim(T)$ denote the set of cofinal-branches of $T$. We define the tree exponent $\lambda^{\mu,tr}$ as follows:
\[
\lambda^{\mu,tr}=\sup\{|Lim(T)|:T \in \mathfrak{T}_{\lambda,\mu}\}
\]
\end{definition}

\begin{remark}\label{remark tree exponent leq exponent}
For every two infinite cardinals $\lambda$ and $\mu$ we have:
\[
\lambda^{\mu,tr} \leq \lambda^\mu
\]
\end{remark}

Recall, that a \emph{weakly compact} cardinal is a strongly inaccesible cardinal that satisfies the tree property.

\begin{fact}\label{fact 6.3}
Let $\kappa$ be an uncountable cardinal. The principle $\kappa \to (\kappa)_2$ holds if and only if $\kappa$ is weakly compact.
\end{fact}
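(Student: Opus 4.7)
The plan is to prove both directions separately: $(\Leftarrow)$ via the tree $T(C)$ of Definition \ref{definition T(C)} and the tree property, and $(\Rightarrow)$ by establishing regularity, strong-limitness, and the tree property in turn, using explicit bad colorings combined with Lemma \ref{the main lemma}.

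For $(\Leftarrow)$, suppose $\kappa$ is weakly compact and let $C:[\kappa]^2\to 2$ be a coloring. Since $\kappa$ is strongly inaccessible, each level of $T(C)$ has size at most $2^{|\alpha|}<\kappa$, so $T(C)$ is a $\kappa$-tree, and by the tree property it admits a cofinal branch $b:\kappa\to 2$. I would then recursively pick a strictly increasing sequence $\langle\delta_\xi:\xi<\kappa\rangle$ with $C(\cdot,\delta_\xi)\restriction(\sup_{\eta<\xi}\delta_\eta+1)=b\restriction(\sup_{\eta<\xi}\delta_\eta+1)$, which is possible since $b$ is a branch of $T(C)$. Then $C(\delta_\eta,\delta_\xi)=b(\delta_\eta)$ for $\eta<\xi$, and restricting the index set to where $b(\delta_\eta)$ takes a fixed value $\epsilon\in\{0,1\}$ yields a $C$-homogeneous set of size $\kappa$.

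For $(\Rightarrow)$, assume $\kappa\to(\kappa)_2$. For regularity, if $\cf(\kappa)=\lambda<\kappa$ with cofinal $\langle\mu_i:i<\lambda\rangle$, coloring $\{\alpha,\beta\}$ by whether they share an interval $[\mu_i,\mu_{i+1})$ admits no $\kappa$-homogeneous set (homogeneous sets are either bounded in one interval or transversals of size $\le\lambda$). For strong-limitness, if $2^\mu\ge\kappa$ for some $\mu<\kappa$, fix an injection $f:\kappa\to{}^\mu 2$ and color $\{\alpha,\beta\}$ (with $\alpha<\beta$) by whether $f(\alpha)<_{\mathrm{lex}}f(\beta)$; a $\kappa$-sized homogeneous set would produce a $<_{\mathrm{lex}}$-monotone or anti-monotone subsequence of $({}^\mu 2,<_{\mathrm{lex}})$ of length $\ge\mu^+$, ruled out by induction on the coordinate length (at successor coordinate $\delta'+1$ split by the first bit and apply induction to the larger half; at limit $\delta$ apply pigeonhole to the first-disagreement coordinates of consecutive pairs to reduce to a shorter prefix). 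For the tree property, I would bootstrap from $\kappa\to(\kappa)_2$ to $\kappa\to(\kappa)_\mu$ for every $\mu<\kappa$, via iterated application of the $2$-color Ramsey property to the color-by-color $2$-colorings $C_i(\alpha,\beta)=[C(\alpha,\beta)=i]$, with strong inaccessibility controlling the iterated intersections of $\kappa$-sized homogeneous sets. In particular $\kappa\to(\kappa)_{la_\mu}$ holds for every $\mu<\kappa$, and Lemma \ref{the main lemma} then yields the tree property.

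The main obstacle will be the strong-limit step: the inductive argument ruling out monotone subsequences in $({}^\mu 2,<_{\mathrm{lex}})$ of length $\mu^+$ must uniformly treat well-ordered and reverse-well-ordered subsequences, and the limit-coordinate case requires a careful pigeonhole on the first-disagreement coordinates of consecutive pairs that preserves monotonicity when passing to a shorter coordinate prefix. A secondary technical point is the bootstrap from $(\kappa)_2$ to $(\kappa)_\mu$, where strong inaccessibility is essential to keep iterated intersections of homogeneous sets unbounded in $\kappa$.
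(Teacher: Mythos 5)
The paper does not actually prove Fact~\ref{fact 6.3}; it is cited as the classical Erd\H{o}s--Tarski characterization of weak compactness, so there is no in-paper argument to compare against. Evaluating your attempt on its own terms: the $(\Leftarrow)$ direction via $T(C)$ is correct, and so are the regularity and strong-limit steps of $(\Rightarrow)$. In particular the induction on the coordinate $\delta$ does close: at limit $\delta$, pigeonholing the first-disagreement ordinals of consecutive pairs below a fixed $\delta^*<\delta$ gives $|\delta|^+$-many pairwise distinct, still $<_{\mathrm{lex}}$-ordered restrictions in ${}^{\delta^*}2$, and since $|\delta^*|^+\le|\delta|^+$ this contradicts the inductive hypothesis at $\delta^*$; the successor case is routine.

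The tree-property step has a genuine and in fact fatal gap. Lemma~\ref{the main lemma} concerns only trees whose levels are \emph{uniformly} bounded by a fixed $\mu<\kappa$; but by the paper's own Claim~\ref{the exercise of kunen}, for regular $\kappa$ and $\lambda<\kappa$ every $\kappa$-tree with levels of size $<\lambda$ already has a cofinal branch. Since $\kappa$ is inaccessible, every $\mu<\kappa$ satisfies $\mu^+<\kappa$, so the statement you obtain from Lemma~\ref{the main lemma} (``every $\kappa$-tree with levels $\le\mu$ has a cofinal branch'') holds for \emph{every} regular uncountable $\kappa$, weakly compact or not, and therefore cannot yield the tree property: the $\kappa$-Aronszajn trees you must rule out necessarily have level sizes cofinal in $\kappa$. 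Independently, the proposed bootstrap from $\kappa\to(\kappa)_2$ to $\kappa\to(\kappa)_\mu$ by iterating the $2$-colorings $C_i$ fails: at a limit stage $\lambda\le\mu$ the nested homogeneous sets $H_i$ returned by Ramsey can have intersection of size $<\kappa$ (Ramsey gives no control over which $\kappa$-sized set appears), and strong inaccessibility does not remedy this; the usual proof of $\kappa\to(\kappa)_\mu$ goes \emph{through} the tree property, so using it here would be circular. The direct route is the classical linearization argument: put a linear order $\prec$ on a $\kappa$-tree $T$ (with universe $\kappa$, $<_T$ refining $<$) by comparing histories at the first level of disagreement; color $\{\alpha,\beta\}$ by whether $\prec$ agrees with $<$; a $\kappa$-sized $\prec$-monotone set $H$ forces, for each level $\gamma$, the level-$\gamma$ predecessors of the elements of $H$ to be eventually constant (the fibers are $\prec$-convex and there are fewer than $\kappa$ of them), and these eventual values form a cofinal branch. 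This bypasses Lemma~\ref{the main lemma} entirely and handles trees with unbounded level sizes.
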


By the following theorem, a cardinal satisfying the tree property is strongly inaccessible in the tree sense if and only if it is strongly inaccessible.
\begin{theorem}\label{theorem 5.4}
Let $\kappa$ be an uncountable regular cardinal that satisfies the tree property. The following are equivalent:
\begin{enumerate}
\item $\lambda^{\mu,tr}<\kappa$ for each $\lambda,\mu<\kappa$ and \item  $\kappa$ is strongly inaccessible.
\end{enumerate}
\end{theorem}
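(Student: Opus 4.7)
The plan is to prove the two implications separately; notably, neither will use the tree property hypothesis, which merely pins down the context of weakly compact cardinals.

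For $(2)\Rightarrow(1)$ I would fix $\lambda,\mu<\kappa$ with $\mu$ infinite (as required by the definition of the tree exponent) and invoke Remark \ref{remark tree exponent leq exponent} to obtain $\lambda^{\mu,tr}\leq \lambda^\mu$. Strong inaccessibility of $\kappa$ gives $\lambda^\mu \leq 2^{\max(\lambda,\mu)} < \kappa$, hence $\lambda^{\mu,tr}<\kappa$.

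For $(1)\Rightarrow(2)$, since $\kappa$ is already regular and uncountable, it suffices to show that $\kappa$ is a strong limit. I would assume toward a contradiction that $2^\mu \geq \kappa$ for some $\mu<\kappa$, and pick the least such $\mu$; necessarily $\mu$ is an infinite cardinal. Setting $\lambda := 2^{<\mu}$, the minimality of $\mu$ makes every term in the supremum defining $2^{<\mu}$ strictly less than $\kappa$, and regularity of $\kappa$ together with $\mu<\kappa$ then forces $\lambda<\kappa$. I would then exhibit the full binary tree $T := ({}^{<\mu}2,\subseteq)$, which has height $\mu$ and exactly $\lambda$ nodes, and whose cofinal branches correspond bijectively to ${}^\mu 2$, so $|Lim(T)| = 2^\mu \geq \kappa$. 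The side condition $\mu\leq\lambda$ needed to place $T$ in $\mathfrak{T}_{\lambda,\mu}$ follows from Cantor ($\nu^+\leq 2^\nu$ for every infinite $\nu$) applied either to $\mu$ itself in the successor case or to cofinally many $\nu<\mu$ in the limit case. We conclude $\lambda^{\mu,tr}\geq 2^\mu \geq\kappa$, contradicting $(1)$.

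The only substantive step is spotting the right test tree: once $\mu$ is chosen as the least cardinal with $2^\mu\geq\kappa$, the full binary tree of height $\mu$ packs enough cofinal branches to violate $(1)$ while still having its node count controlled by regularity of $\kappa$. Everything else reduces to routine cardinal arithmetic, so no genuine obstacle arises.
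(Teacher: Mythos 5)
Your proof is correct, and for $(1)\Rightarrow(2)$ it takes a genuinely different (and more elementary) route from the paper. The paper invokes Fact \ref{fact 6.3} to reduce $(1)\Rightarrow(2)$ to establishing $\kappa\to(\kappa)_2$: given a $2$-coloring $F$ of $[\kappa]^2$ it builds a tree by transfinite recursion whose nodes encode the $F$-values along a branch, uses clause $(1)$ to bound the size of each level (so that the tree is a $\kappa$-tree), then invokes the tree property to produce a cofinal branch, from which a $\kappa$-sized $F$-homogeneous set is extracted. That argument thus makes essential use of the tree-property hypothesis. Your argument avoids the detour through partition calculus entirely: you take the least $\mu<\kappa$ with $2^\mu\ge\kappa$, verify by regularity and minimality that $\lambda:=2^{<\mu}<\kappa$ and $\mu\le\lambda$, and observe that the full binary tree $({}^{<\mu}2,\subseteq)$ lies in $\mathfrak{T}_{\lambda,\mu}$ while having $2^\mu\geq\kappa$ cofinal branches. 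This is a direct refutation of the failure of strong limit from clause $(1)$, and it never touches the tree property. The upshot is that you have noticed the tree-property hypothesis is superfluous for Theorem \ref{theorem 5.4} itself; the equivalence of $(1)$ and $(2)$ holds for every uncountable regular $\kappa$. The paper's route has the compensating virtue of fitting the overall theme of translating colorings into trees, and it deduces $\kappa\to(\kappa)_2$ along the way, which is what the subsequent Corollary actually wants, but as a proof of the stated theorem your argument is leaner and proves a strictly more general fact.
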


\begin{proof}
On the one hand, by Remark \ref{remark tree exponent leq exponent}, Clause $(2)$ implies Clause $(1)$. Conversely, assume that Clause $(1)$ holds. By Fact \ref{fact 6.3}, it is enough to prove that $\kappa \rightarrow (\kappa)_2$ holds.

Take a coloring $F:[\kappa]^2 \to 2$. We construct a tree $T=\{t_\alpha:\alpha<\kappa\}$ of functions of ordinals into $2$: We define $t_\alpha$ by induction on $\alpha<\kappa$. We define $t_\alpha \restriction \xi$ by induction on $\xi$. If $t_\alpha \restriction \xi$ is not in $\{t_\beta:\beta<\alpha \}$ then we define $t_\alpha=t_\alpha \restriction \xi$. Otherwise $t_\alpha \restriction \xi=t_\beta$ for some $\beta<\alpha$. In this case, we define $t_\alpha(\xi)=F(\beta,\alpha)$. Clearly, the domain of each $t_\alpha$ is an ordinal smaller than $\kappa$ and $T=\{t_\alpha:\alpha<\kappa\}$ is a tree. Note that $t_\alpha \neq t_\beta$ for all $\alpha \neq \beta$ and therefore if $t_\alpha<_T t_\beta$ then $\alpha<\beta$.

We prove by induction on $\alpha<\kappa$ that the cardinality of $Lev_\alpha(T)$ is smaller than $\kappa$. By the induction hypothesis and the regularity of $\kappa$, $|Lev_{<\alpha}(T)|<\kappa$. Therefore by Assumption (1), we have $|Lim(Lev_{<\alpha}(T))|<\kappa$. But the function on $Lev_\alpha(T)$ into $Lim(Lev_{<\alpha}(T))$ assigning $t_\beta$ to $\{t_\gamma \in T:t_\gamma <_T t_\beta\}$ is injective. So $|Lev_\alpha(T)|<\kappa$.

$T$ is a $\kappa$-tree. So by assumption, $T$ has a $\kappa$-branch, $B$. If $t_\alpha<_T t_\beta<_T t_\gamma$ are in $B$ then $t_\beta \restriction \xi=t_\gamma \restriction \xi=t_\alpha$ where $\xi=ht(t_\alpha,T)$. So $F(\alpha,\beta)=t_\beta(\xi)=t_\gamma(\xi)=F(\alpha,\gamma)$. For $i<2$, define:
\[
H_i=\{\alpha \in \kappa:t_\alpha \in B \text{ and }\forall \beta>\alpha [t_\beta \in B \Rightarrow F(\alpha,\beta)=i]\}
\]

Each ordinal in $\{\alpha \in \kappa:t_\alpha \in B\}$ is in $H_0$ or in $H_1$. So one of $H_0$ and $H_1$ has cardinality $\kappa$. But each $H_i$ is homogeneous, because $F({\alpha,\beta})=i$ holds whenever $\alpha<\beta$ are in $H_i$.
\end{proof}


\begin{corollary}
Let $\kappa$ be an uncountable cardinal. Then $\kappa$ is weakly compact if and only if $\kappa$ satisfies the following properties:
\be
\item $\kappa$ is regular,
\item $\kappa$ satisfies the tree property and
\item $\lambda^{\mu,tr}<\kappa$ for each $\lambda,\mu<\kappa$.
\ee
\end{corollary}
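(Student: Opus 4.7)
Recall that by definition a weakly compact cardinal is a strongly inaccessible cardinal satisfying the tree property. The plan is to read off both directions directly from Theorem \ref{theorem 5.4} together with elementary cardinal arithmetic.

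For the forward direction, suppose $\kappa$ is weakly compact. Then $\kappa$ is strongly inaccessible, which immediately gives (1) (regularity), and it satisfies the tree property, which is (2). For (3), fix $\lambda,\mu<\kappa$. By Remark \ref{remark tree exponent leq exponent}, $\lambda^{\mu,tr}\le\lambda^{\mu}$. Since $\kappa$ is regular and uncountable and $\lambda,\mu<\kappa$, we have $\lambda\cdot\mu<\kappa$, and since $\kappa$ is a strong limit, $2^{\lambda\cdot\mu}<\kappa$. Hence $\lambda^{\mu}\le 2^{\lambda\cdot\mu}<\kappa$, so $\lambda^{\mu,tr}<\kappa$.

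For the converse, assume (1), (2) and (3) hold. Properties (1) and (2) are precisely the hypotheses of Theorem \ref{theorem 5.4}, so by that theorem, property (3) is equivalent to $\kappa$ being strongly inaccessible. Combined with the tree property (2), this yields that $\kappa$ is weakly compact.
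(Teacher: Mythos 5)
Your proof is correct and follows essentially the same route as the paper: forward direction via Remark \ref{remark tree exponent leq exponent} together with strong inaccessibility, and converse via Theorem \ref{theorem 5.4}. The only difference is that you spell out the elementary cardinal arithmetic ($\lambda^{\mu}\le 2^{\lambda\cdot\mu}<\kappa$) that the paper leaves implicit.
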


\begin{proof}
Assume that $\kappa$ is weakly compact. Clearly $\kappa$ satisfies Properties (1),(2) and is strongly inaccesible. By Remark \ref{remark tree exponent leq exponent} $\kappa$ satisfies Property (3).

Conversely, if $\kappa$ satisfies Properties (1)-(3) then by Theorem \ref{theorem 5.4}, $\kappa$ is strongly inaccesible, so weakly compact.
\end{proof}

\section{Ramsey Theorems for locally-additive colorings}

We present two theorems relating colorings of $\kappa$ by $\mu$ colors where $\mu^+<\kappa=cf(\kappa)$: while in Theorem \ref{the only theorem} the coloring is assumed to be locally-additive, in Theorem \ref{theorem shelah *-locally-additive} the coloring is assumed to be *-locally-additive. We observe that the result for $*$-local additivity coloring cannot be generalized to the case $\mu^+=\kappa$. For local additivity we do not know whether the generalization holds. Here is a summary of the results of this section:
\begin{align*}
&\kappa \rightarrow_{*-la} (\kappa)_\mu \text{ (Theorem \ref{theorem shelah *-locally-additive}) }\\
&\kappa \rightarrow_{la} (\kappa)_\mu \text{ (Theorem \ref{the only theorem}) } \\
&\mu^+ \nrightarrow_{*-la} (\mu^+)_\mu  \text{ (Proposition \ref{proposition mu^+<kappa can's be omitted})  }            \\
&\mu^+ \rightarrow_{la} (\mu^+)_\mu? \text{ (unknown) }
\end{align*}

\subsection{*-locally-additive colorings}
In this subsection we verify that Shelah's proof of (\cite[Theorem 1.1]{sh42}) works for *-locally additive colorings (rather than additive colorings) with few number of colors (rather than finite).
For the rest of this subsection, we assume:
\[
\mu<\kappa=\cf(\kappa) \text{ and }
C:[\kappa]^2 \to \mu \text{ is a *-locally additive coloring. }
\]

\begin{definition}
For $\beta,\gamma<\kappa$, we write $\beta \sim \gamma$ if for some $\alpha<\kappa$ with $\beta,\gamma<\alpha$, we have $C(\beta,\alpha)=C(\gamma,\alpha)$. In this case, we say that $\alpha$ witnesses that $\beta \sim \gamma$.
\end{definition}

\begin{remark}\label{remark 7.2}
By local additivity, if $\beta \neq \gamma$, $\alpha_1$ witnesses that $\beta \sim \gamma$ and $\alpha_1<\alpha_2$ then $\alpha_2$ witnesses that $\beta \sim \gamma$ as well. Hence, $\sim$ is an equivalence relation.
\end{remark}

\begin{lemma}\label{lemma number of equivalence classes}
If $\mu^+<\kappa$ then
the number of $\sim$-equivalence classes is $\mu$ at most.
\end{lemma}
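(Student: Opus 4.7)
The plan is to argue by contradiction using a direct pigeonhole argument, where the key enabler is the upward-persistence of witnesses recorded in Remark \ref{remark 7.2}: if some $\alpha_1$ witnesses $\beta\sim\gamma$, then so does every $\alpha_2>\alpha_1$. This means that to test whether a whole family of elements falls into the same equivalence class, we may evaluate the coloring against one single sufficiently large ordinal.

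First, I would suppose toward a contradiction that the number of $\sim$-equivalence classes strictly exceeds $\mu$, and choose a system of representatives $\{\beta_i : i<\mu^+\}$ from pairwise distinct classes. Because $\mu^+<\kappa$ and $\kappa$ is regular, the set $\{\beta_i:i<\mu^+\}$ is bounded in $\kappa$, so I can fix a single ordinal $\alpha^*<\kappa$ with $\alpha^*>\beta_i$ for every $i<\mu^+$.

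Next, I would look at the fiber map $i\mapsto C(\beta_i,\alpha^*)$, which is a function from $\mu^+$ into $\mu$. By the pigeonhole principle there exist $i\neq j$ with $C(\beta_i,\alpha^*)=C(\beta_j,\alpha^*)$; that is, $\alpha^*$ witnesses $\beta_i\sim\beta_j$, contradicting the choice of representatives.

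There is essentially no real obstacle in this argument: the *-local additivity of $C$ is used only indirectly, through the fact (Remark \ref{remark 7.2}) that it forces $\sim$ to be an equivalence relation whose membership can be tested at any single common upper bound. The only mild point worth verifying carefully is that the bound $\alpha^*<\kappa$ indeed exists, which requires both the regularity of $\kappa$ and the strict inequality $\mu^+<\kappa$.
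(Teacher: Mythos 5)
Your proof is correct and is essentially identical to the paper's: both assume $\mu^+$ pairwise inequivalent representatives, bound them by a single $\alpha^*<\kappa$ (using regularity and $\mu^+<\kappa$), and apply pigeonhole on the $\mu$-valued map $i\mapsto C(\beta_i,\alpha^*)$ to force two representatives into the same class. The only cosmetic difference is that you explicitly cite Remark \ref{remark 7.2} to frame the role of $*$-local additivity, which the paper leaves implicit.
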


\begin{proof}
Assume towards contradiction that there are $\mu^+$ equivalence classes at least. Let $\{\alpha_i:i<\mu^+\}$ be a system of representatives. Since $\mu^+<\kappa$, we can find an ordinal $\alpha^*<\kappa$ which is bigger than each $\alpha_i$. The set $\{C(\alpha_i,\alpha^*):i<\mu^+\}$ is included in $\mu$. So for some distinct $i,j$ we have $C(\alpha_i,\alpha^*)=C(\alpha_j,\alpha^*)$, so $\alpha_i \sim \alpha_j$, a contradiction.
\end{proof}

\begin{theorem}\label{theorem shelah *-locally-additive}
$\kappa \rightarrow (\kappa)_{*-la_\mu}$ holds whenever $\mu^+<\kappa=cf(\kappa)$.
\end{theorem}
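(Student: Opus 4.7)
My plan is to combine Lemma \ref{lemma number of equivalence classes} with a pigeonhole on a single $\sim$-equivalence class and a transfinite induction to extract a large homogeneous set. To begin, I would invoke the lemma to partition $\kappa$ into at most $\mu$ classes under $\sim$; since $\kappa$ is regular and $\mu<\kappa$, at least one class $A\subseteq\kappa$ has cardinality $\kappa$. Pick any $\beta_0\in A$ and examine the fiber map $\alpha\mapsto C(\beta_0,\alpha)$ restricted to $\{\alpha\in A:\alpha>\beta_0\}$. The domain has size $\kappa$ (since $|A|=\kappa$ and $\kappa$ is regular), while the range lies in $\mu<\kappa$, so some color $c\in\mu$ is attained on a set $B\subseteq A$ with $|B|=\kappa$.

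The bridge from the equivalence relation to the coloring itself is the following: for every $\gamma\in A$ with $\gamma\ne\beta_0$, fix an ordinal $\alpha^{*}_{\gamma}<\kappa$ witnessing $\gamma\sim\beta_0$. By Remark \ref{remark 7.2}, every $\alpha\ge\alpha^{*}_{\gamma}$ also witnesses $\gamma\sim\beta_0$, i.e., $C(\gamma,\alpha)=C(\beta_0,\alpha)$. In particular, for every $\alpha\in B$ with $\alpha\ge\alpha^{*}_{\gamma}$ one gets $C(\gamma,\alpha)=c$. Setting $\alpha^{*}_{\beta_0}:=\beta_0+1$, the same equality holds trivially for $\gamma=\beta_0$.

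I would then recursively choose $\gamma_\xi$ for $\xi<\kappa$ by setting $\gamma_0=\beta_0$ and, at step $\xi>0$, picking any $\gamma_\xi\in B$ strictly above both $\sup\{\gamma_\eta:\eta<\xi\}$ and $\sup\{\alpha^{*}_{\gamma_\eta}:\eta<\xi\}$. Regularity of $\kappa$ together with $|\xi|<\kappa$ keeps these suprema below $\kappa$, while $|B|=\kappa$ guarantees a legitimate choice. For any $\eta<\xi$, the previous paragraph then yields $C(\gamma_\eta,\gamma_\xi)=C(\beta_0,\gamma_\xi)=c$: the case $\eta=0$ follows directly from $\gamma_\xi\in B$, while for $\eta>0$ the threshold $\gamma_\xi\ge\alpha^{*}_{\gamma_\eta}$ combined with $\gamma_\xi\in B$ gives $C(\gamma_\eta,\gamma_\xi)=C(\beta_0,\gamma_\xi)=c$. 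Hence $\{\gamma_\xi:\xi<\kappa\}$ is a $C$-homogeneous set of size $\kappa$.

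The main obstacle is essentially organizational: guaranteeing that at each stage of the induction there is a candidate in $B$ past both the previous $\gamma_\eta$'s and their thresholds $\alpha^{*}_{\gamma_\eta}$, and arguing correctly for limit stages. Both requirements are delivered by the regularity of $\kappa$ together with $|B|=\kappa$, so no combinatorial input beyond Lemma \ref{lemma number of equivalence classes} and Remark \ref{remark 7.2} is needed.
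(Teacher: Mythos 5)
Your proposal is correct and matches the paper's argument in substance: both pass to a size-$\kappa$ (equivalently, unbounded) $\sim$-class obtained from Lemma~\ref{lemma number of equivalence classes}, use Remark~\ref{remark 7.2} to push witnesses upward, and combine a pigeonhole on the fibers (using $\mu<\kappa=\cf(\kappa)$) with a transfinite recursion past suprema. The only cosmetic difference is the order of operations: you fix a pivot $\beta_0$ and pigeonhole on $\alpha\mapsto C(\beta_0,\alpha)$ before building the sequence, whereas the paper first constructs a coherent sequence $\langle y_\alpha\rangle$ inside the class and then pigeonholes on $\alpha\mapsto C(y_0,y_\alpha)$.
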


\begin{proof}
 By lemma \ref{lemma number of equivalence classes}, some $\sim$-equivalence class, $A$, is unbounded. We construct by induction a strictly increasing sequence $\langle y_\alpha:\alpha \in \kappa\rangle$ of elements of $A$, such that whenever $\beta<\gamma<\alpha$ we have $C(y_\beta,y_\alpha)=C(y_\gamma,y_\alpha)$. Given $y_\beta$ for all $\beta<\alpha$ we choose $y_\alpha$ as follows: for every $\beta<\gamma$ there is $\epsilon_{\beta,\gamma}<\kappa$ such that $\epsilon_{\beta,\gamma}>y_{\beta,},y_{\gamma}$ and $C(y_\beta,\epsilon_{\beta,\gamma})=C(y_\gamma,\epsilon_{\beta,\gamma})$. By Remark \ref{remark 7.2}, we may choose $y_\alpha$ to be any element of $A$ greater than $\sup\{\epsilon_{\beta,\gamma}: \beta<\gamma<\alpha\}$.

It follows that for every $\beta<\gamma<\alpha$ in $\kappa$ we have $C(y_\beta,y_\alpha)=C(y_\gamma,y_\alpha)$. Consider the function $f:\kappa \setminus \{0\} \to \mu$ defined by $f(\alpha)=C(y_0,y_\alpha)$. As $\mu<\kappa=cf(\kappa)$, there is an unbounded subset $B$ of $\kappa \setminus \{0\}$ and $t \in \mu$ such that $C(y_0,y_\alpha)=t$ holds for each $\alpha \in B$. So $C(y_\beta,y_\alpha)=t$ whenver $\beta<\alpha$ are in $B$. Thus, the set  $\{y_\alpha:\alpha \in B \}$ is $C$-homogeneous of size $\kappa$.
\end{proof}

\begin{proposition}\label{proposition mu^+<kappa can's be omitted}
$\kappa \rightarrow (\kappa)_{*-la_\mu}$ does not hold if $\mu^+=\kappa$.
\end{proposition}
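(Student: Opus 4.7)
The plan is to construct a counterexample coloring $C:[\mu^+]^2\to\mu$ that is $*$-locally-additive yet admits no homogeneous set of size $\mu^+$. The key observation is that when $\kappa=\mu^+$, every fiber map $C(\cdot,\delta):\delta\to\mu$ can be taken to be injective, simply because $|\delta|\le\mu$ for every $\delta<\mu^+$. With injective fibers, the $*$-local-additivity axiom will turn out to hold vacuously, while any would-be homogeneity on three points collapses immediately by injectivity.

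Concretely, I would fix for each $\delta<\mu^+$ an injection $e_\delta:\delta\to\mu$ and set $C(\alpha,\delta)=e_\delta(\alpha)$ for all $\alpha<\delta<\mu^+$. Unpacking the definition, $*$-local-additivity asks that whenever $\alpha<\beta<\gamma<\delta$ in $\mu^+$ we have $C(\alpha,\gamma)=C(\beta,\gamma)\Rightarrow C(\alpha,\delta)=C(\beta,\delta)$; but the hypothesis $e_\gamma(\alpha)=e_\gamma(\beta)$ is impossible since $e_\gamma$ is injective and $\alpha\neq\beta$, so the implication holds vacuously.

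To rule out large homogeneous sets, suppose for contradiction that $H\subseteq\mu^+$ is $C$-homogeneous with value $i$ and that $H$ contains three distinct ordinals $\alpha<\beta<\delta$. Then $e_\delta(\alpha)=C(\alpha,\delta)=i=C(\beta,\delta)=e_\delta(\beta)$, contradicting the injectivity of $e_\delta$. Hence $|H|\le 2$, and in particular $|H|<\mu^+$, as needed.

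I do not foresee any substantive obstacle: the proposition is a transparency statement showing that the hypothesis $\mu^+<\kappa$ in Theorem \ref{theorem shelah *-locally-additive} cannot be dropped. Translated into the framework of the previous subsection, the coloring $C$ above has $\beta\sim\gamma$ only when $\beta=\gamma$, so there are $\mu^+$ many singleton $\sim$-equivalence classes, and the pigeonhole step in Lemma \ref{lemma number of equivalence classes} becomes unavailable — which is precisely the obstruction that the example is designed to expose.
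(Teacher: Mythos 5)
Your proposal is correct and is essentially the same as the paper's proof: you fix for each $\delta<\mu^+$ an injection from $\delta$ into $\mu$, use it to define the fiber maps, observe that $*$-local-additivity holds vacuously since distinct points never receive the same color at a common top point, and conclude that no homogeneous set of size $3$ exists. The closing remark tracing the failure through the $\sim$-equivalence classes of Lemma \ref{lemma number of equivalence classes} is a nice diagnostic not spelled out in the paper, but the core argument is identical.
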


\begin{proof}
We present a coloring witnessing the negation of this principle. For each $\alpha<\kappa$, let $f_\alpha:\alpha \to \mu$ be an injection. For $\beta<\alpha<\kappa$, we define $C(\beta,\alpha)=f_\alpha(\beta)$. The coloring $C$ is $*$-locally additive vacuously, because for any $\beta<\gamma<\alpha<\kappa$, we have $C(\beta,\alpha) \neq C(\gamma,\alpha)$ . By the same reason, there is no homogeneous set of cardinality $3$.
\end{proof}

\subsection{locally-additive colorings}

If we replace in Proposition \ref{proposition mu^+<kappa can's be omitted} the property of *-local additivity by local additivity then we get the following open question:
\begin{question}
Does $\mu^+ \rightarrow  (\mu^+)_{la_\mu}$ hold?
\end{question}

\begin{theorem}\label{the only theorem}
Let $\kappa$ be a regular cardinal and let $\mu$ be a cardinal with
 $\mu^+<\kappa$. The following principle holds:

 \[
 \kappa \rightarrow  (\kappa)_{la_\mu}
 \]

\end{theorem}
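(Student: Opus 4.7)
The plan is to invoke Lemma~\ref{the main lemma} and reduce to the tree-combinatorial statement: every tree of height $\kappa$ whose levels have size $\leq\mu$ admits a $\kappa$-sized branch. It then suffices to establish this tree statement under the hypothesis $\mu^+<\kappa=\cf(\kappa)$.

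Fix such a tree $T$. First I would pass to its \emph{persistent} subtree $T^*=\{x\in T: x\text{ has descendants at arbitrarily high levels of }T\}$. Using $\mu<\kappa=\cf(\kappa)$, one verifies that $T^*$ is downward-closed in $T$, every level of $T^*$ is nonempty of size $\leq\mu$, and every node of $T^*$ has a persistent child; moreover, since $T^*$ is downward-closed, each persistent node has descendants at every level $\geq$ its own. Next I would construct, by transfinite recursion on $\alpha<\kappa$, a chain $(x_\alpha)_{\alpha<\kappa}$ in $T^*$ with $x_\alpha\in Lev_\alpha(T^*)$ and $x_\beta<_T x_\alpha$ for $\beta<\alpha$. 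Successor stages use the ``persistent child'' property.

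At a limit stage $\alpha<\kappa$, the sets $S_\beta:=\{y\in Lev_\alpha(T^*):x_\beta<_T y\}$ form a decreasing chain of subsets of a $\mu$-sized set, hence strictly decrease at most $\mu$ many times. Since $\mu^+<\kappa$, whenever $\alpha>\mu$ this sequence stabilizes below $\alpha$ at some value $S_{\beta^*}$, which is nonempty by persistence of $x_{\beta^*}$; within $S_{\beta^*}$ (of size $\leq\mu$) a further pigeonhole on the $\kappa$-many descendants of $x_{\beta^*}$ at levels $\geq\alpha$ yields a persistent element to serve as $x_\alpha$. The main obstacle is the case of limit stages $\alpha\leq\mu$, where the stabilization argument fails directly; I expect to handle these either by running the construction along a cofinal subsequence of $\kappa$ that avoids the small-cofinality indices, or by refining the induction hypothesis to maintain a coherent set of candidates that survives the countable limits. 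This careful handling of small-cofinality limit stages will be the technical crux of the proof.
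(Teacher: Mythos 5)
Your top-level reduction is the paper's: invoke Lemma~\ref{the main lemma} and reduce to showing that every $\kappa$-tree with levels of size $\leq\mu$ has a cofinal branch when $\mu^+<\kappa=\cf(\kappa)$. That tree statement is exactly Claim~\ref{the exercise of kunen}, which the paper cites and proves. Where you diverge is in how you propose to prove it: you attempt a greedy chain construction through the persistent (well-pruned) subtree, whereas the paper defines a regressive function $\alpha\mapsto q(\alpha)$ on the stationary set of ordinals of cofinality $\lambda=\mu^+$ (Claim~\ref{argue}), applies Fodor's Lemma to get a stationary set on which $q$ is constant, and concludes that above a fixed node of level $\alpha^*$ there is never any splitting, so the nodes above it form a $\kappa$-sized chain; the singular case of $\lambda$ is handled separately.

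The greedy construction as you state it has a genuine gap, and it is larger than you acknowledge. Stabilization of the decreasing sequence $\langle S_\beta:\beta<\alpha\rangle$ of subsets of a $\mu$-sized set follows from $\cf(\alpha)>\mu$, not from $\alpha>\mu$: there are at most $\mu$ drop points, but when $\cf(\alpha)\leq\mu$ they may be cofinal in $\alpha$ and $\bigcap_{\beta<\alpha}S_\beta$ may be empty. So the problematic stages are not merely the $\mu$-many limits $\alpha\leq\mu$; they are \emph{all} limit ordinals of cofinality $\leq\mu$ below $\kappa$, of which there are club-many, and a greedy chain can genuinely be stuck at such a level. Your first proposed fix (running along a cofinal subsequence of indices of large cofinality) does not by itself repair this: even if you only select nodes at levels of cofinality $\mu^+$, the chosen levels can still accumulate cofinally below an ordinal $\eta$ with $\cf(\eta)\leq\mu$, and then the decreasing intersection at $\eta$ (and hence at the next chosen level above $\eta$) can again be empty. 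Your second suggestion (carry along a coherent set of candidates) is the right instinct and is, in effect, what the Fodor argument accomplishes, but as written it is not a proof. The paper's proof of Claim~\ref{the exercise of kunen} avoids the greedy construction entirely and thus never faces this obstruction; if you want to complete your route, you need either Fodor or some equivalent device to control the club-many small-cofinality limits, not just the initial segment below $\mu$.
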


\begin{proof}
Define $\lambda=\mu^+$. By Claim \ref{the exercise of kunen} (below), every tree of height $\kappa$ whose levels have size $\leq \mu$ is not $\kappa$-Aronszajn. So by Lemma \ref{the main lemma}, the principle $\kappa \rightarrow_{la} (\kappa)_\mu$ holds.
\end{proof}

Recall the following claim \cite[Proposition 7.9]{Large}.
\begin{claim}\label{the exercise of kunen}
Let $\kappa$ and $\lambda$ be cardinals such that $\kappa$ is regular and $\lambda<\kappa$. If $(T,<)$ is
a $\kappa$-tree whose levels have size $<\lambda$ then $(T,<)$ is not $\kappa$-Aronszajn.
\end{claim}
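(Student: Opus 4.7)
The plan is to show that $T$ contains a cofinal branch by passing to the subtree of ``thick'' nodes and constructing a chain by transfinite recursion.

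First, I would define the subtree of thick nodes,
\[
X := \{t \in T : \forall \gamma < \kappa\ \exists s \in Lev_\gamma(T)\ (t \leq_T s)\},
\]
consisting of those $t$ having extensions arbitrarily high in $T$. Two pigeonhole arguments, both using regularity of $\kappa$ together with $|Lev_\alpha(T)| < \lambda < \kappa$, then yield: (i) $X$ meets every level of $T$, and (ii) every $t \in X$ has an extension in $X$ at every higher level. Both follow from a uniform observation---if each of the fewer than $\lambda$ relevant nodes had extensions only to heights bounded by some $\beta_i < \kappa$, then by regularity $\sup_i \beta_i < \kappa$, contradicting either $ht(T) = \kappa$ or the thickness of the node in question. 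Thus $X$ is itself a $\kappa$-tree with levels of size $< \lambda$ in which every node splits upward through all higher levels.

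Next, I would fix an arbitrary section $\alpha \mapsto t_\alpha \in X \cap Lev_\alpha(T)$ and recursively build a chain $\langle x_\alpha : \alpha < \kappa \rangle$ with $x_\alpha \in X \cap Lev_\alpha(T)$, maintaining at each stage a stationary $S_\alpha \subseteq \kappa$ on which $pred_\alpha(t_\beta) = x_\alpha$. The central tool is $\kappa$-completeness of the nonstationary ideal on $\kappa$: at each stage the tracked stationary set is partitioned by a map into $X \cap Lev_{\alpha'}$ (of size $< \lambda < \kappa$), so one of the fibers is itself stationary. Successor steps use this directly to select $x_{\alpha+1}$ as a thick extension of $x_\alpha$ whose associated fiber $S_{\alpha+1} \subseteq S_\alpha$ is stationary.

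The main obstacle is the limit stage, where the chain $\langle x_{\alpha'} : \alpha' < \alpha \rangle$ must admit a common upper bound in $X \cap Lev_\alpha(T)$. I would secure this by ensuring that the decreasing family $\{S_{\alpha'} : \alpha' < \alpha\}$ admits some $\beta \geq \alpha$ via a Fodor-style pressing-down argument on the accumulated coherence data; then $pred_\alpha(t_\beta) \in X_\alpha$ extends every $x_{\alpha'}$ and serves as $x_\alpha$, with a further application of $\kappa$-completeness producing the next stationary set. Once $\{x_\alpha : \alpha < \kappa\}$ is constructed, it is itself a cofinal chain in $T$, whence $T$ is not $\kappa$-Aronszajn.
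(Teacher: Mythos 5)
Your reduction to the thick subtree $X$ is sound and amounts to passing to a well-pruned $\kappa$-subtree, exactly the ``without loss of generality'' step the paper takes. The gap is in the limit stage of the recursion. There you need a stationary $S_\alpha$, and since $x_\alpha$ must lie above every $x_{\alpha'}$, any $\beta$ with $pred_\alpha(t_\beta)=x_\alpha$ automatically lies in $\bigcap_{\alpha'<\alpha}S_{\alpha'}$; so you in fact need this intersection to be stationary, not merely to contain one $\beta$. But $\kappa$-completeness of the nonstationary ideal gives no control over it: whenever the partition of $S_{\alpha'}$ at a successor step has more than one stationary fiber, the discarded part $S_{\alpha'}\setminus S_{\alpha'+1}$ is itself stationary, and a decreasing $\omega$-sequence of stationary sets built this way can have nonstationary or even empty intersection (one can arrange a tree and a choice of $t_\delta$'s so that the $x_n$ trace a length-$\omega$ branch that has no node above it at level $\omega$). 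The ``Fodor-style pressing-down argument on the accumulated coherence data'' is not yet an argument: the natural regressive function, sending $\beta$ to the stage at which it exits the $S_{\alpha'}$'s, has its stationary fibers inside the already-discarded sets, which yields no contradiction.

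The paper avoids any transfinite recursion on stationary sets. Having reduced to a well-pruned tree and (by a separate case) to regular $\lambda$, it shows that for each $\alpha<\kappa$ with $\operatorname{cf}(\alpha)=\lambda$ there is a level $q(\alpha)<\alpha$ above which no node of level $q(\alpha)$ has two distinct extensions at a common level below $\alpha$ --- otherwise repeated splitting would pump $\lambda$ pairwise incomparable nodes into $Lev_\alpha(T)$. A single application of Fodor's Lemma to the regressive map $q$ on $\{\alpha<\kappa:\operatorname{cf}(\alpha)=\lambda\}$ freezes $q$ at some $\alpha^*$, whence every node at level $\alpha^*$ has a unique extension at each higher level; well-prunedness makes that chain cofinal. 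So the pressing-down is applied once, to a globally defined function, rather than iterated through a construction whose stationary residues you cannot track; without an invariant keeping the limit-stage intersections large, your recursive plan cannot be carried out.
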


For completeness, we prove Claim \ref{the exercise of kunen}.
\begin{proof}
\emph{Case A:} $\lambda$ is regular. Without loss of generality $(T,<)$ is well-pruned
(it a well-known fact that every $\kappa$-tree has a well-pruned subtree which is a $\kappa$-tree and so there is no harm in assuming that $(T,<)$ is well-pruned).

\begin{claim}\label{argue}
For every ordinal $\alpha<\kappa$ with $cf(\alpha)=\lambda$ there is an ordinal $q(\alpha)<\alpha$, such that for every ordinal $\beta$ with $q(\alpha)<\beta<\alpha$, and any element $a \in Lev_{q(\alpha)}(T)$ there are no two distinct elements $b,c \in Lev_\beta(T)$ such that $a<b$ and $a<c$.
\end{claim}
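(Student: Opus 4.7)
The plan is to fix $\alpha<\kappa$ with $cf(\alpha)=\lambda$ and to produce $q(\alpha)$ by stabilizing, for each node $b$ at level $\alpha$, the set of ``same-predecessor twins'' of $b$. Since $T$ is well-pruned, $Lev_\alpha(T)$ is non-empty and for every $\beta\le\alpha$ the predecessor map $\pi_\beta:Lev_\alpha(T)\to Lev_\beta(T)$ is well defined. For $b\in Lev_\alpha(T)$ I would set $[b]_\beta:=\pi_\beta^{-1}(\pi_\beta(b))$; as $\beta$ grows, the sets $[b]_\beta$ are non-increasing under $\subseteq$.

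The main step is to show that for each fixed $b$, the family $\langle[b]_\beta:\beta<\alpha\rangle$ stabilizes at some $\beta_b<\alpha$. The bookkeeping is that every strict drop $[b]_\beta\supsetneq[b]_{\beta'}$ ejects at least one element of the fixed set $Lev_\alpha(T)$, and no element can ever return; hence the total number of drops is bounded by $|Lev_\alpha(T)|<\lambda$. (This count also absorbs possible ``limit drops'' $[b]_\gamma\subsetneq\bigcap_{\gamma'<\gamma}[b]_{\gamma'}$ caused by non-Hausdorff behaviour of $T$, since such a drop again ejects at least one element.) These drop points therefore form a set of fewer than $\lambda$ ordinals below $\alpha$; by regularity of $cf(\alpha)=\lambda$ they are bounded in $\alpha$, yielding the desired $\beta_b$.

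I would then set $q(\alpha):=\sup\{\beta_b:b\in Lev_\alpha(T)\}$; a second application of regularity of $\lambda$ to a supremum of $<\lambda$ ordinals below $\alpha$ gives $q(\alpha)<\alpha$. For every $\beta\in(q(\alpha),\alpha)$ and every $b$, stability gives $[b]_\beta=[b]_{q(\alpha)}$, so the equivalence relations ``same predecessor at level $\beta$'' and ``same predecessor at level $q(\alpha)$'' induce the same partition of $Lev_\alpha(T)$. To close the argument: if some $a\in Lev_{q(\alpha)}(T)$ had two distinct extensions $a_1,a_2\in Lev_\beta(T)$, then $\pi_\beta^{-1}(a_1)$ and $\pi_\beta^{-1}(a_2)$ would be disjoint non-empty subsets of $\pi_{q(\alpha)}^{-1}(a)$ (non-emptiness by well-pruning), splitting a single class of the $q(\alpha)$-partition into two distinct classes of the $\beta$-partition, contradicting coincidence.

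The main obstacle I anticipate is the tempting but failing approach of stabilizing the widths $|Lev_\beta(T)|$ directly: a non-decreasing $\alpha$-sequence of cardinals below a weakly inaccessible $\lambda$ can grow cofinally, so widths need not stabilize before $\alpha$. The fix is precisely to work instead with the sets $[b]_\beta$, which all live in the single fixed set $Lev_\alpha(T)$ of cardinality $<\lambda$; this lets the element-removal counting uniformly cap the stabilization time across all $b$ simultaneously, regardless of the widths' behaviour.
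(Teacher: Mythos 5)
Your proof is correct, and it takes a genuinely different route from the paper's. The paper argues by contradiction: assuming no bound $q(\alpha)$ works, it builds, by a recursion of length $\lambda$, a family $\langle d_i : i<\lambda\rangle$ of pairwise distinct nodes of $Lev_\alpha(T)$; at each stage it sets $q=\sup_{j<i}\alpha_j$, invokes the negation of the claim to produce a splitting pair $b,c$ above some $a\in Lev_q(T)$, and then chooses $d_i\in Lev_\alpha(T)$ through $a$ so as to disagree at level $\alpha_i$ with the (at most one) earlier $d_j$ passing through $a$; this forces $|Lev_\alpha(T)|\geq\lambda$, a contradiction. Your argument is direct: for each $b\in Lev_\alpha(T)$ you track the non-increasing family of same-predecessor classes $[b]_\beta$ inside the fixed set $Lev_\alpha(T)$ of size $<\lambda$, observe that every strict drop (including the limit drops caused by non-Hausdorff behaviour) permanently ejects at least one element, so there are fewer than $\lambda$ drop points, hence they are bounded in $\alpha$; taking $q(\alpha)$ to be the supremum over the $<\lambda$ many stabilization points and lifting a hypothetical splitting pair at a level $\beta>q(\alpha)$ to level $\alpha$ via well-pruning then contradicts class stability. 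Both proofs rest on exactly the same two facts, namely $|Lev_\alpha(T)|<\lambda=cf(\alpha)$ and well-prunedness (the paper uses it to choose $d_i\in Lev_\alpha(T)$, you use it to lift $a_1,a_2$ from $Lev_\beta(T)$ to $Lev_\alpha(T)$), but yours trades the recursive bookkeeping for a stabilization count, which is arguably cleaner, and your remark about why stabilizing the widths $|Lev_\beta(T)|$ directly would fail is a genuinely useful warning. One small point: regularity of $\lambda$ is not actually needed in your two boundedness steps; the fact that a subset of $\alpha$ of size $<cf(\alpha)$ is bounded in $\alpha$ is immediate from the definition of cofinality (though regularity of $\lambda$ is of course already standing in Case~A of the surrounding proof).
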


\begin{proof}
Towards a contradiction assume that there exists an $\alpha$ with $cf(\alpha)=\lambda$ such that for every $q(\alpha)<\alpha$ there are $\beta,a,b,c$ such that $q(\alpha)<\beta<\alpha$, $a \in Lev_{q(\alpha)}(T)$, $b,c \in Lev_\beta(T)$, $a<_T b$ and $a <_T c$. We choose by induction on $i<\lambda$ an ordinal $\alpha_i<\alpha$ and an element $d_i \in Lev_\alpha(T)$ such that $d_i \restriction Lev_{\alpha_{i}}(T) \neq d_j \restriction Lev_{\alpha_{i}}(T)$ for each $j<i$. Suppose we have chosen $\alpha_j$ and $d_j$ for each $j<i$. Let $q=\sup_{j<i}\alpha_j$. Then for each $j_1,j_2<i$ the elements $d_{j_1} \restriction Lev_q(T)$ and $d_{j_2} \restriction Lev_q(T)$ are distinct. In order to find $\alpha_{i}$ and $d_i$, we apply our assumption (towards a contradiction), where we substitute $q$ in place of $q(\alpha)$. So there is an ordinal $\alpha_{i}$, an element $a \in Lev_{q}(T)$ and two elements $b,c \in Lev_{\alpha_{i}}(T)$ such that $a<b$ and $a<c$. By the induction hypothesis, there is at most one ordinal $j<i$ such that $d_j \restriction Lev_{q}(T)=a$. We choose $d_i \in Lev_{\alpha}(T)$ such that $d_i \restriction Lev_{q}(T)=a$ and if for some $j<i$, $d_j \restriction Lev_{q}(T)=a$ then $d_i \restriction Lev_{\alpha_{i}}(T) \neq d_j \restriction Lev_{\alpha_{i}}(T)$ (if $d_j \restriction Lev_{\alpha_{i}}(T)=b$ then $d_i \restriction Lev_{\alpha_{i}}(T)=c$ and vice versa). Hence, we can carry out the induction. The $d_i$'s are pairwise distinct. So $|Lev_\alpha(T)|$ is $\lambda$ at least, a contradiction. Claim \ref{argue} is proved.
\end{proof}

We now can apply Fodor's Lemma with the function from $\kappa$ to $\kappa$, $\alpha \mapsto q(\alpha)$. So for some stationary subset $S$ of $\kappa$ and some ordinal $\alpha^*<\kappa$, $q(\alpha)=\alpha^*$ for each $\alpha \in S$. Fix $a \in Lev_{\alpha^*}(T)$. The set $\{b \in Lev_{\geq \alpha^*}(T):a<_Tb\}$ is a chain of size $\kappa$ since $(T,<_T)$ is well-pruned. So $(T,<_T)$ is not $\kappa$-Aronszajn.

\emph{Case B:} $\lambda$ is singular so in partucular, $\lambda$ is a limit cardinal. For the sake of a contradiction, assume that $(T,<)$ is $\kappa$-Aronszajn. By Case A, for each $\mu<\lambda$ there is a level of size $\mu^+$ at least. Since $\lambda<cf(\kappa)=\kappa$, it follows that there is a level of cardinality $\lambda$ at least, contradicting our assumption. Claim \ref{the exercise of kunen} is proved.
\end{proof}

\bibliographystyle{amsplain}
\bibliography{lit}

\end{document}